\newtheorem{lemma}{Lemma}[section]
\newtheorem{theorem}[lemma]{Theorem}
\newtheorem{corollary}[lemma]{Corollary}
\newtheorem{proposition}[lemma]{Proposition}
\newtheorem{example}[lemma]{Example}
\newtheorem{remark}[lemma]{Remark}
\newtheorem*{thm}{Theorem}
\renewcommand{\H}{\mathcal H}
\newcommand{\Hom}{\operatorname{Hom}}
\newcommand{\Ext}{\operatorname{Ext}}
\newcommand{\End}{\operatorname{End}}
\newcommand{\op}{{\textit{op}}}
\newcommand{\add}{\operatorname{add}}
\newcommand{\module}{\operatorname{mod}}
\newcommand{\M}{\mathcal M}
\newcommand{\G}{\mathcal G}
\newcommand{\R}{\mathcal R}
\newcommand{\T}{\mathcal T}
\newcommand{\U}{\mathcal U}
\newcommand{\V}{\mathcal V}
\newcommand{\W}{\mathcal W}
\newcommand{\C}{\mathcal C}
\newcommand{\X}{\mathcal X}
\renewcommand{\S}{\mathcal S}
\title{From triangulated categories to module categories via localisation}
\author[Buan]{Aslak Bakke Buan}
\address{Institutt for matematiske fag\\
Norges teknisk-naturvitenskapelige universitet\\
N-7491 Trondheim\\
Norway}
\email{aslakb@math.ntnu.no}
\author[Marsh]{Robert J. Marsh}
\address{School of Mathematics \\ University of Leeds \\ Leeds \\ LS2
  9JT \\ UK}
\email{marsh@maths.leeds.ac.uk}
\thanks{This work was supported by the Engineering and Physical
Sciences Research Council [grant number EP/G007497/1] and by the NFR
[FRINAT grant number 196600].}
\begin{document}

\begin{abstract}
We show that the category of finite-dimensional modules over the
endomorphism algebra of a rigid object in a Hom-finite triangulated category
is equivalent to the Gabriel-Zisman localisation of the category with respect
to a certain class of maps.
This generalises the $2$-Calabi-Yau tilting theorem of Keller-Reiten,
in which the module category is obtained as a factor category,
to the rigid case.
\end{abstract}

\keywords{Triangulated categories, $2$-Calabi-Yau categories, localisation, module categories, rigid objects, cluster-tilting objects, approximations}

\date{22 February 2011}

\subjclass[2010]{Primary 18E30, 18E35, 16G20; Secondary 13F60.}

\maketitle

\section*{Introduction}
Localisation
is an important tool in category theory. At a fundamental level,
it is the introduction of formal inverses for a class of morphisms in the
category known as 
Gabriel-Zisman localisation~\cite[Chap.\ 1]{gz}. This
category always exists provided there are no set-theoretic obstructions.
Morphisms in the new category can be regarded as compositions of the original morphisms and the formal inverses that were added (up to a certain equivalence relation). If the class of morphisms to be inverted satisfies certain axioms
then the new morphisms can be described via a calculus of fractions:
in particular, each new morphism can be written as the composition of one of
the original morphisms and a formal inverse, often represented as a diagram of
morphisms known as a roof; see~\cite[Sect.\ 3]{krause}. This is, for example, the case when the bounded derived category of a module category is formed from the homotopy category of complexes (see e.g.~\cite[III.2-4]{gm}).
In this case the localisation of a
triangulated category gives us a new
triangulated category, which is often the case in applications of
localisation to triangulated categories; see~\cite{krause}.

Here, we present an interesting case where localisation produces instead
an abelian category, in fact the module category over the (opposite) endomorphism algebra of a rigid object in a triangulated category satisfying some mild assumptions. Furthermore, the class of morphisms inverted does not satisfy the axioms
mentioned above.

We consider the tilting theory of cluster categories and, more generally, Hom-finite Calabi-Yau triangulated categories, which has recently been widely investigated. The study of such categories was originally motivated by
their links to cluster algebras, and indeed there has been a
considerable amount of activity and a lot of results in this direction;
see~\cite{k,r} for recent surveys.

However, the study of such categories has also contributed to new
developments in the theory of finite dimensional (and more generally, non-commutative) algebras. One of the central theorems in this direction is the following:
\vskip 0.2cm
\noindent \textbf{$\mathbf{2}$-Calabi-Yau tilting theorem} (Keller-Reiten~\cite[Prop. 2.1]{kr}). \\
\emph{Let $\C$ be a triangulated Hom-finite Krull-Schmidt $2$-Calabi-Yau
category over an algebraically-closed field $k$,
and let $T$ be a cluster-tilting object in $\C$.
Then the category $\C / \Sigma T$ is equivalent to the category
$\module \End_{\C}(T)^{\op}$ of finite dimensional $\End_{\C}(T)^{\op}$-modules.}
\vskip 0.2cm
Here $\Sigma$ denotes the suspension functor of $\C$.
Recall that a \emph{cluster-tilting object} in $\C$ is an object
satisfying $\Ext_{\C}^1(T,X) = 0$ for an object $X$ of $\C$ if and
only if $X$ is in the additive closure of $T$. Here, for objects $C_1$
and $C_2$ in $\C$, $\Ext^1_{\C}(C_1,C_2)$ is shorthand for
$\Hom_{\C}(C_1,\Sigma C_2)$, following the usual convention.

The category $\C /  \Sigma T$ has the same objects as $\C$,
with maps given by maps in $\C$ modulo maps
factoring through $\add \Sigma T$ (the additive closure of $\Sigma T$).
This generalises a corresponding result of~\cite[Thm. 2.2]{bmr} in the case
of cluster categories.

It turns out that the assumption that $\C$ is $2$-Calabi-Yau
is not required for this theorem. This is a result of
Koenig-Zhu~\cite[Cor. 4.4]{kz}. See also~\cite[Prop. 6.2]{iy}.

Our aim in this paper is to use localisation to generalise this result
to the case where $T$ is only assumed to be a rigid object, i.e.\ an object
satisfying $\Ext^1_{\C}(T,T)=0$.
We consider a triangulated Hom-finite Krull-Schmidt triangulated category $\C$,
over a field $k$.
In addition, we assume that $\C$ is skeletally small (as a way of ensuring that
there are no set-theoretic obstructions to the localisations we use).
Note that this condition is satisfied for cluster categories.
We assume that $T$ is a rigid object in $\C$.
We show that with these assumptions, $\module \End_{\C}(T)^{\op}$ can be obtained as the Gabriel-Zisman localisation of $\C$ with respect to a suitable class of
maps in $\C$.

More precisely, let $\X_T$ denote the full subcategory of $\C$ whose
objects are the objects $X$ of $\C$ having no non-zero maps from $T$.
Consider the class of maps $\widetilde{\S}$ in $\C$ consisting of the maps
$f \colon X \to Y$ such that when $f$ is completed to a triangle
$$\Sigma^{-1}Z \overset{h}{\to} X \overset{f}\to Y \overset{g}{\to} Z$$
both $g$ and $h$ factor through $\X_T$. Note that $\widetilde{\S}$ is
well-defined (see Lemma~\ref{Stildewelldefined}).

Let $\C_{\widetilde{\S}}$ be the Gabriel-Zisman localisation \cite[Chapter 1]{gz}
of $\C$ with respect to $\widetilde{\S}$. This is defined by formally inverting
all maps in  $\widetilde{\S}$.
Let $L_{\widetilde{\S}} \colon \C \to \C_{\widetilde{\S}}$ denote the
localisation functor.
We prove that the
functor $H= \Hom_{\C}(T,-) \colon \C \to \module \End_{C}(T)^{\op}$
inverts the maps in $\widetilde{\S}$. Therefore,
there is a uniquely defined functor $G \colon \C_{\widetilde{\S}} \to
\module \End_{C}(T)^{\op}$, such that $H = G L_{\widetilde{\S}}$
Our main theorem is:

\begin{thm} \label{mainresult}
Let $\C$ be a skeletally small Hom-finite Krull-Schmidt triangulated category with
rigid object $T$.
Let $\widetilde{\S}$ be the class of maps defined above. Then the induced functor
$G \colon \C_{\widetilde{\S}} \to \module \End_{C}(T)^{\op}$ is an equivalence.
\end{thm}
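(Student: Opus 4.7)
The plan is to show that the induced functor $G$ is both essentially surjective and fully faithful, working from the observation that $H = \Hom_\C(T,-)$ kills $\X_T$ (since $\Hom_\C(T,W)=0$ for $W\in \X_T$) and hence kills any morphism factoring through $\X_T$. Applied to the triangle data of $f\in\widetilde{\S}$, the long exact sequence of $\Hom$ shows that $H(f)$ is an isomorphism, which is why the factorisation $H=GL_{\widetilde{\S}}$ exists to begin with.

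For essential surjectivity, given $M\in\module \End_\C(T)^{\op}$ I would pick a projective presentation
\[
\Hom_\C(T, T_1) \overset{\bar f}{\to} \Hom_\C(T, T_0) \to M \to 0
\]
with $T_0,T_1\in\add T$. By Yoneda, $\bar f$ is induced by a morphism $f\colon T_1\to T_0$ in $\C$; completing $f$ to a triangle $T_1\to T_0\to X\to\Sigma T_1$ and applying $H$, rigidity ($\Hom_\C(T,\Sigma T_1)=0$) forces $H(X)\cong M$, so $G(X)\cong M$.

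For fullness and faithfulness, my idea is to replace each object by one admitting a presentation by $\add T$. Using Hom-finiteness and Krull--Schmidt, every $Y$ admits a right $\add T$-approximation $T_Y\to Y$, and iterating produces candidate objects in the subcategory $\pf(T)$ of objects fitting in a triangle $T_1\to T_0\to X\to\Sigma T_1$ with $T_i\in\add T$. I would show that suitable comparison maps between $X$ and its replacement lie in $\widetilde{\S}$, so that every object becomes isomorphic to one in $\pf(T)$ inside $\C_{\widetilde{\S}}$. On $\pf(T)$ one can then lift a module homomorphism $\phi\colon HX\to HY$ to an honest chain map of $\add T$-presentations, producing a morphism $X\to Y$ in $\C$ that maps to $\phi$; this handles fullness of $G$.

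The main obstacle is faithfulness: given $f\colon X\to Y$ with $H(f)=0$, one must exhibit $f$ as zero in $\C_{\widetilde{\S}}$, i.e.\ construct explicit morphisms in $\widetilde{\S}$ witnessing that $f$ becomes null after localisation. Since $\widetilde{\S}$ is not expected to admit a calculus of fractions, a single roof cannot suffice. I expect the argument will proceed by factoring $f$ through a right $\add T$-approximation of $Y$, using $H(f)=0$ to show the remaining factor lands inside $\X_T$, and then verifying that the auxiliary morphisms produced by these triangles genuinely belong to $\widetilde{\S}$ --- namely, that both connecting maps of their completing triangles factor through $\X_T$. This last piece of bookkeeping, combined with careful use of rigidity of $T$, is likely to be the most delicate part of the proof.
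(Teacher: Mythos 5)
Your outline follows the same overall architecture as the paper's proof (density via lifting projective presentations of modules to triangles over $\add T$; replacement of arbitrary objects by objects of the subcategory $\C(T)$ of objects admitting an $\add T$-presentation, which is what you call $\pf(T)$; fullness via lifting module maps to maps of presentations), but the faithfulness step as you describe it has a genuine gap. You formulate faithfulness as: given $f\colon X\to Y$ in $\C$ with $H(f)=0$, show that $f$ becomes zero in $\C_{\widetilde{\S}}$. That is not what faithfulness requires. A morphism of $\C_{\widetilde{\S}}$ is an equivalence class of zigzags built from morphisms of $\C$ and formal inverses of elements of $\widetilde{\S}$, and since, as you yourself note, $\widetilde{\S}$ admits no calculus of fractions, there is no a priori way to write such a morphism as a single roof, let alone as the image of one morphism of $\C$. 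The missing ingredient is the surjectivity of $\Hom_{\C}(U,V)\to\Hom_{\C_{\widetilde{\S}}}(U,V)$ for $U$ in $\C(T)$. The paper obtains this by introducing the smaller class $\S\subseteq\widetilde{\S}$ of maps $s\colon X\to Y$ whose completing triangle $\Sigma^{-1}Z\to X\to Y\to Z$ has $\Sigma^{-1}Z$ actually lying in $\X_T$ (not merely the connecting map factoring through $\X_T$), proving that every map $U\to Y$ with $U\in\C(T)$ factors in $\C$ through any such $s$ --- the argument uses $Z\in\Sigma^2T^{\perp}$ to kill the induced map $\Sigma T_1\to Z$, and fails as stated for general elements of $\widetilde{\S}$ --- and then checking separately that $L_{\S}$ already inverts all of $\widetilde{\S}$, so that $\C_{\S}$ and $\C_{\widetilde{\S}}$ coincide. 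Without this device, or an equivalent one, your faithfulness argument only treats morphisms lying in the image of $L_{\widetilde{\S}}$.

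A second, smaller point: even once a morphism is represented by a genuine $f$ with $H(f)=0$, concluding that it vanishes in the localisation is not immediate. The correct chain is that $H(f)=0$ forces $f$ to factor through $\X_T$, that every object of $\X_T$ becomes a zero object in the localisation, and that consequently $\underline{u+v}=\underline{u}$ whenever $v$ factors through $\X_T$; but $\C_{\widetilde{\S}}$ is not known to be additive in advance (additivity is a consequence of the theorem), so these facts require the explicit manipulations with direct sums $X\amalg U$, $U\in\X_T$, and matrix maps carried out in the paper. Your sketch of ``verifying that the auxiliary morphisms belong to $\widetilde{\S}$'' points in the right direction but does not engage with either of these two issues.
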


We remark that there is another recent approach to the construction of abelian
categories from triangulated categories (as subquotients) by Nakaoka
\cite{nakaoka} (we discuss this in Section $6$).

The paper is organised as follows. In Section $1$ we recall some results
that we need to use, including a triangulated version of Wakamatsu's Lemma.
In Section $2$, we study the Hom-functor associated with a rigid object $T$.
A key result in this section is that a map in $\C$ is inverted by $\Hom_{\C}(T,-)$
if and only if it lies in $\widetilde{\S}$.
In Section $3$, we study the main properties of the Gabriel-Zisman
localisation of $\C$ at $\widetilde{\S}$. In Section $4$ we prove our main result, and in
Section $5$ we describe its relationship to the result of Iyama-Yoshino mentioned
above. In Section $6$ we give a short discussion of the relationship
to the work of Nakaoka \cite{nakaoka}.
In Section $7$ we give some examples.

We would like to thank Yann Palu for his simpler proofs of Lemmas~\ref{factoring}
and~\ref{smallerkernel} and his comments on an earlier version of the
paper.
We would also like to thank Bill Crawley-Boevey and Henning Krause for 
useful discussions and the referee for helpful comments.
Aslak Buan would like to thank the Algebra, Geometry and
Integrable Systems Group and the School of
Mathematics at the University of Leeds for their kind hospitality.
Robert Marsh would like to thank the Department of Mathematical Sciences at the NTNU,
Trondheim, for their kind hospitality.

\section{Preliminaries}

Let $\C$ be a triangulated Hom-finite Krull-Schmidt
category over field $k$.
Let $\Sigma$ denote the suspension functor of $\C$.

Let $\X$ be a full subcategory of $\C$.
Then a \emph{right $\X$-approximation} of $C$ in $\C$ is a map
$X \to C$, with $X$ in $\X$, such that for all objects $Y$ in $\X$,
the sequence $\Hom_{\C}(Y, X) \to \Hom_{\C}(Y,C) \to 0$ is exact.
A map $X \overset{f}{\to} C$ is
called \emph{right minimal} if, for every $X \overset{g}{\to}  X$ such that $fg=f$,
we have that $g$ is an isomorphism. A map is called a \emph{minimal right $\X$-approximation}
of $C$ if it is right minimal and it is a right $\X$-approximation of $\C$.
Dually, we have the concepts of \emph{left $\X$-approximations} and
\emph{left minimal} maps.

The full subcategory $\X$ is called \emph{functorially finite} if,
for every object $C$ in $\C$,  there exists a right $\X$-approximation
ending in $C$ and a left $\X$-approximation starting in $\C$.

The following is well-known and straightforward to check.

\begin{lemma}\label{app-lemma}
Let $\X$ be a full additive subcategory of $\C$, and $C$ an object of $\C$.
\begin{itemize}
\item[(a)] If there is a right $\X$-approximation of $C$
  then there is a minimal right $\X$-approximation of $C$,
  unique up to isomorphism.
\item[(b)]
If $g  \colon X \to C$ is a minimal right $\X$-approximation of $C$,
then each right approximation is, up to isomorphism, of the form
$f \amalg 0 \colon X \amalg X' \to C$.
\end{itemize}
\end{lemma}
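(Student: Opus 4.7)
The plan for part (a) is the standard ``paring down'' argument combined with Fitting's lemma. Starting from any right $\X$-approximation $f \colon X \to C$, the Krull-Schmidt plus Hom-finite hypotheses ensure that $X$ has only finitely many indecomposable summands, so among all right $\X$-approximations of $C$ one may choose $g \colon Y \to C$ with the number of indecomposable summands of $Y$ as small as possible.

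To show that such a $g$ is automatically right minimal, suppose $h \colon Y \to Y$ satisfies $gh = g$ while $h$ is not an isomorphism. Since $\End_\C(Y)$ is a finite-dimensional $k$-algebra, Fitting's lemma supplies an $h$-stable decomposition $Y = Y_1 \oplus Y_2$ with $h|_{Y_1}$ an automorphism, $h|_{Y_2}$ nilpotent, and $Y_2 \neq 0$. Writing $g$ in block form as $(g_1, g_2)$, the relation $gh = g$ gives $g_2 = g_2 h|_{Y_2}^n$ for all $n$, and nilpotence forces $g_2 = 0$. Thus $g$ factors as $g_1 \circ \pi_1$ through the projection $\pi_1 \colon Y \to Y_1$, and $g_1 \colon Y_1 \to C$ is itself a right $\X$-approximation (assuming, as is standard, that $\X$ is closed under summands so that $Y_1 \in \X$: any lift through $g$ projects to a lift through $g_1$). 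This contradicts the minimal summand count, so $g$ is right minimal. Uniqueness follows from the usual double-factorisation: two minimal right $\X$-approximations $g, g'$ satisfy $g = g'\alpha$ and $g' = g\beta$ for some $\alpha, \beta$; then $g = g\beta\alpha$ together with right minimality of $g$ forces $\beta\alpha$ to be an isomorphism, and symmetrically $\alpha\beta$ is an isomorphism, so $\alpha$ and $\beta$ are mutually inverse.

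For part (b), given a minimal right $\X$-approximation $g \colon X \to C$ and any other right $\X$-approximation $f' \colon Y \to C$, the approximation property yields $\alpha \colon X \to Y$ with $f'\alpha = g$ and $\beta \colon Y \to X$ with $g\beta = f'$. Minimality of $g$ makes $\beta\alpha$ an isomorphism, so $\alpha$ is a split monomorphism; in the Krull-Schmidt setting this produces a splitting $Y \cong X \oplus Y'$ identifying $\alpha$ with the canonical inclusion, and $f'$ then takes the form $(g, b)$ for some $b \colon Y' \to C$. Since $\X$ is closed under summands, $Y' \in \X$, and the approximation property of $g$ produces $s \colon Y' \to X$ with $gs = -b$. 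The automorphism $\bigl(\begin{smallmatrix} 1 & s \\ 0 & 1 \end{smallmatrix}\bigr)$ of $X \oplus Y'$ then converts $(g, b)$ into $(g, 0) = g \amalg 0$, which is the desired form.

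The only non-routine ingredient is the Fitting step in part (a): one needs that finite-dimensional endomorphism rings in a $k$-linear Krull-Schmidt Hom-finite category are semi-perfect, so Fitting's lemma applies to every $h \in \End_\C(Y)$ despite the absence of kernels and images in the abelian-category sense. Once that is granted, the rest of the argument is purely formal.
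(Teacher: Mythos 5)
The paper offers no proof of this lemma (it is declared ``well-known and straightforward to check''), so there is nothing to compare against line by line; your argument is the standard one and it is correct. The Fitting step you flag as the only non-routine ingredient is legitimate in this setting: for $h\in\End_{\C}(Y)$ the finite-dimensional commutative subalgebra $k[h]$ is a product of local algebras, its idempotents split in the Krull--Schmidt category $\C$ and commute with $h$, and grouping the factors according to whether $h$ maps to a unit or into the (nilpotent) maximal ideal yields exactly the $h$-stable decomposition $Y=Y_1\oplus Y_2$ you use. Your caveat that $\X$ must be closed under direct summands is a genuinely necessary hypothesis --- the statement fails without it --- but it is implicit in the paper's convention and holds for every subcategory to which the lemma is applied ($\add T$, $T^{\perp}$, $\Sigma T^{\perp}$, $\C(T)$). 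One small simplification in part (b): you do not actually need $Y'\in\X$ to produce $s$, since writing $\beta=(\beta_1,\beta_2)$ with respect to $Y\cong X\amalg Y'$, the identity $f'=g\beta$ already gives $b=g\beta_2$, so $s=-\beta_2$ does the job.
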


A full subcategory $\X$ of $\C$ is called \emph{extension-closed} if, for
each triangle $X' \to Y \to X'' \to \Sigma X' $ in $\C$ with $X',X''$
in $\X$, also $Y$ is in $\X$. Let $\X^{\perp} = \{Y \in \C \mid
\Ext^{1}_{\C}(X,Y) =0 \text{ for all } X \in \X \}$, and dually let
 $^{\perp} \X = \{Y \in \C \mid
\Ext^{1}_{\C}(Y,X) =0 \text{ for all } X \in \X \}.$

The next lemma is well-known and is a triangulated version of
\emph{Wakamatsu's Lemma}; see e.g.~\cite[Section 2]{iy}; see
also~\cite[Lemma 2.1]{j}.

\begin{lemma}\label{wakamatsu}
Let $\X$ be an extension-closed subcategory of a triangulated category
$\C$.
\begin{itemize}
\item[(a)] Let $X \to C$ be a right $\X$-approximation of $C$ and
$\Sigma^{-1} C \to Y \to X \to C $ a completion to a triangle.
Then $Y$ is in $\X^{\perp}$, and the map
 $\Sigma^{-1} C \to Y$ is a left $\X^{\perp}$-approximation of $\Sigma^{-1} C$.

\item[(b)]  Let $C \to X$ be a left $\X$-approximation of $C$ and
  $\Sigma^{-1}Z \to C \to X \to Z \to \Sigma C $ a completion to a triangle.
Then $Z$ is in $^{\perp}\X$, and the map
 $Z \to \Sigma C$ is a right $^{\perp}\X$-approximation of $\Sigma C$.
\end{itemize}
\end{lemma}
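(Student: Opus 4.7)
The plan is to prove part (a) and to obtain (b) by the evident duality (reversing arrows, replacing $\X^\perp$ by ${}^\perp \X$, and exchanging pushouts by pullbacks). Part (a) splits into showing $Y \in \X^\perp$ and showing that $\Sigma^{-1}C \to Y$ is a left $\X^\perp$-approximation, and I tackle the approximation claim first since it is cleaner.

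For the approximation claim, take $W \in \X^\perp$ and any map $\beta \colon \Sigma^{-1}C \to W$. Applying $\Hom_{\C}(-, W)$ to the triangle $\Sigma^{-1}C \to Y \to X \to C$, one piece of the long exact sequence reads
\[
\Hom_{\C}(Y, W) \to \Hom_{\C}(\Sigma^{-1}C, W) \to \Hom_{\C}(\Sigma^{-1}X, W).
\]
The rightmost group equals $\Ext^1_{\C}(X, W)$, which vanishes because $X \in \X$ and $W \in \X^\perp$. Hence the first arrow is surjective and $\beta$ factors through $\Sigma^{-1}C \to Y$.

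The main obstacle is the membership $Y \in \X^\perp$. Fix $X' \in \X$ and $\alpha \colon X' \to \Sigma Y$; the goal is $\alpha = 0$. Apply $\Hom_{\C}(X', -)$ to the triangle $Y \xrightarrow{j} X \xrightarrow{g} C \xrightarrow{\partial} \Sigma Y$. The right-approximation property forces $g_\ast$ to be surjective, which makes $\partial_\ast = 0$ and hence the map $(\Sigma j)_\ast \colon \Hom_{\C}(X', \Sigma Y) \to \Hom_{\C}(X', \Sigma X)$ injective. It therefore suffices to show $(\Sigma j)\alpha = 0$. I interpret this as the class of the triangle $X \xrightarrow{\iota} E' \to X' \xrightarrow{(\Sigma j)\alpha} \Sigma X$ obtained by pushing the extension $Y \to E \to X' \xrightarrow{\alpha} \Sigma Y$ along $j$. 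Extension-closedness of $\X$ places $E'$ in $\X$. Since $gj = 0$, the obstruction to extending $g$ along $\iota$ vanishes, yielding $h \colon E' \to C$ with $h\iota = g$. The right-$\X$-approximation property then lifts $h$ through $g$ to $\tilde h \colon E' \to X$ with $g \tilde h = h$, so that $g(\tilde h \iota) = g$. Right-minimality of $g$ (which may be arranged via Lemma~\ref{app-lemma} at the cost of a trivial direct summand) then makes $\tilde h \iota$ an automorphism of $X$. Consequently $\iota$ admits a retraction, the pushout triangle splits, $(\Sigma j)\alpha = 0$, and hence $\alpha = 0$.

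The delicate part of this chain is the composite extension--lift--invert argument: extension-closedness is used to keep $E'$ in $\X$ so that the approximation applies, the relation $gj = 0$ is what lets $g$ extend along $\iota$, and right-minimality provides the final lever to turn the self-map of $X$ satisfying $g(\cdot) = g$ into an isomorphism. Part (b) is verified by the dual of this construction, using left $\X$-approximations, pullbacks, and the symmetric vanishing $\Ext^1_{\C}(Z, X') = \Hom_{\C}(Z, \Sigma X')$ on the other side.
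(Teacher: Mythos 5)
The paper offers no proof of this lemma (it simply cites Iyama--Yoshino and J{\o}rgensen), so there is nothing internal to compare against; your argument is the standard proof of the triangulated Wakamatsu lemma. The approximation claim is handled correctly (and, as you implicitly note, needs neither minimality nor extension-closedness), and the extension--lift--invert chain for the membership claim is carried out correctly \emph{provided the approximation $g\colon X\to C$ is right minimal}: the injectivity of $(\Sigma j)_\ast$, the placement of $E'$ in $\X$ by extension-closedness, the extension of $g$ along $\iota$ using $gj=0$, and the final use of right minimality are all sound, as is the appeal to the opposite category for (b).

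The one step that does not survive scrutiny is the parenthetical claim that right minimality ``may be arranged via Lemma~\ref{app-lemma} at the cost of a trivial direct summand.'' For the membership claim this reduction is not free: writing $g=g_0\amalg 0\colon X_0\amalg X_1\to C$ with $g_0$ minimal, the cocone changes from $Y_0$ to $Y_0\amalg X_1$ with $X_1\in\X$, and an object of $\X$ need not lie in $\X^{\perp}$ for a general extension-closed $\X$. Indeed, for non-minimal approximations the membership claim is simply false as stated: take $\X=\C$ (certainly extension-closed) and the right $\C$-approximation given by the projection $C\amalg D\to C$; its cocone is $D$, and $D\in\C^{\perp}$ forces $\Hom_{\C}(\Sigma D,\Sigma D)=0$, i.e.\ $D\cong 0$. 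So you must either read the lemma with ``minimal right $\X$-approximation'' (as in the sources the paper cites), in which case your proof is complete as written, or supplement the reduction with the observation that in every application in this paper one has $\X=\add T$ with $T$ rigid, whence $\X\subseteq\X^{\perp}$ and the extra summand $X_1$ does lie in $\X^{\perp}$. Either fix is a one-line addition, but one of them is needed; as currently phrased, the reduction step asserts something false in the generality of the statement.
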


We shall also often use the fact that in the situation of Lemma~\ref{wakamatsu}(b),
the map $\Sigma^{-1}Z\to C$ is a right $\Sigma^{-1}({}^\perp\X)$-approximation of $C$.

\section{Hom-functors associated to rigid objects}\label{section-rigid}

Let $T$ be a rigid object in $\C$. Let $T^{\perp}$ denote
$(\add T)^{\perp}$, where $\add T$ denotes the additive closure of $T$;
we define ${}^\perp T$ similarly. 
Note that $\Sigma T^{\perp} = \X_T$, as defined in the introduction.
It is easy to see that both $\add T$ and $T^{\perp}$ are extension closed.
In this section we will study these perpedincular categories. We
will go on to characterise the maps sent to zero by the functor
$\Hom_{\C}(T,-)$ and use this to characterise the maps inverted by this
functor.

Using Lemma~\ref{wakamatsu}, one obtains the following well-known fact
(see~\cite[Prop. 2.3]{iy}).

\begin{lemma}\label{func}
$T^{\perp}$ is covariantly finite, and $^{\perp}T$ is contravariantly finite.
\end{lemma}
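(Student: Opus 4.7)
The plan is to deduce both statements directly from Wakamatsu's Lemma (Lemma~\ref{wakamatsu}) once we know that $\add T$ itself is functorially finite in $\C$. Since $\C$ is Hom-finite and $T$ is a single object, the space $\Hom_\C(T,C)$ is a finite-dimensional right $\End_\C(T)$-module for every $C$, so choosing a generating set of cardinality $n$ yields a map $T^{n} \to C$ which is a right $\add T$-approximation of $C$. Dually one obtains a left $\add T$-approximation $C \to T^{m}$. Thus $\add T$ is functorially finite.

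For covariant finiteness of $T^{\perp}$, I would pick any $C \in \C$ and take a right $\add T$-approximation $T' \to \Sigma C$. Completing this to a triangle
$$C \to Y \to T' \to \Sigma C$$
and applying Lemma~\ref{wakamatsu}(a) (with $\X = \add T$ and $\Sigma C$ playing the role of the object being approximated), the object $Y$ lies in $(\add T)^{\perp} = T^{\perp}$, and the map $C \to Y$ is a left $T^{\perp}$-approximation of $C$. This gives a left approximation for every $C$, which is precisely covariant finiteness.

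For contravariant finiteness of ${}^{\perp}T$, I would dualise: apply Lemma~\ref{wakamatsu}(b) to $\Sigma^{-1}C$ with $\X = \add T$. Take a left $\add T$-approximation $\Sigma^{-1}C \to T'$ and complete to a triangle
$$\Sigma^{-1}Z \to \Sigma^{-1}C \to T' \to Z.$$
The lemma then tells us that $Z$ lies in ${}^{\perp}(\add T) = {}^{\perp}T$ and that the induced map $Z \to \Sigma(\Sigma^{-1}C) = C$ is a right ${}^{\perp}T$-approximation of $C$.

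There is no real obstacle: the only content beyond Wakamatsu is the functorial finiteness of $\add T$, which is immediate from the Hom-finiteness hypothesis. The one thing to be careful about is the indexing in Wakamatsu's Lemma, i.e.\ applying part (a) to $\Sigma C$ and part (b) to $\Sigma^{-1}C$ so that the approximation produced by the lemma is of $C$ itself rather than a shift of it.
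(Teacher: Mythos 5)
Your proof is correct and follows exactly the route the paper intends: the paper derives this lemma from Wakamatsu's Lemma (Lemma~\ref{wakamatsu}) with $\X=\add T$, and you have correctly supplied the two ingredients it leaves implicit, namely the functorial finiteness of $\add T$ (from Hom-finiteness) and the shift bookkeeping needed to land the approximation on $C$ itself. The only point worth making explicit is that Wakamatsu's Lemma requires $\add T$ to be extension-closed, which holds precisely because $T$ is rigid (as the paper notes just before the lemma).
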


The following lemma is a triangulated variation of the Auslander-Reiten
correspondence~\cite[Prop. 1.10]{ar}. This lemma is well-known,
but we include the short proof for convenience.

\begin{lemma}\label{doubleperp}
$^{\perp}(T^{\perp}) = \add T = (^{\perp}T)^{\perp}$.
\end{lemma}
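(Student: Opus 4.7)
The plan is to prove the two equalities by a standard Wakamatsu-style approximation argument. I will focus on $^{\perp}(T^{\perp}) = \add T$; the equality $(^{\perp}T)^{\perp} = \add T$ is entirely dual.

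For the inclusion $\add T \subseteq {}^{\perp}(T^{\perp})$, take any $X \in \add T$. Then $X$ is a summand of $T^n$ for some $n$, and for $Y \in T^{\perp}$ one has $\Ext^1_{\C}(T^n, Y) = 0$, so $\Ext^1_{\C}(X, Y) = 0$. This shows $X \in {}^{\perp}(T^{\perp})$.

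For the reverse inclusion, let $X \in {}^{\perp}(T^{\perp})$. Since $\C$ is Hom-finite and Krull-Schmidt, $\Hom_{\C}(T, X)$ is finite-dimensional, so choosing a basis yields a right $\add T$-approximation $g \colon T_0 \to X$ (this is the key standard fact I use to produce approximations into $\add T$). Complete $g$ to a triangle
$$\Sigma^{-1} X \to Y \to T_0 \overset{g}{\to} X.$$
By Lemma~\ref{wakamatsu}(a) applied to $\X = \add T$, we obtain $Y \in (\add T)^{\perp} = T^{\perp}$. Rotating the triangle gives $Y \to T_0 \to X \to \Sigma Y$, and applying $\Hom_{\C}(X, -)$ produces the exact sequence
$$\Hom_{\C}(X, T_0) \to \Hom_{\C}(X, X) \to \Hom_{\C}(X, \Sigma Y).$$
Because $X \in {}^{\perp}(T^{\perp})$ and $Y \in T^{\perp}$, the last term $\Hom_{\C}(X, \Sigma Y) = \Ext^{1}_{\C}(X, Y)$ vanishes. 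Thus $\id_X$ lifts through $g$, giving a section $X \to T_0$ of $g$. Hence $X$ is a direct summand of $T_0 \in \add T$, and therefore $X \in \add T$.

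The dual inclusion $({}^{\perp}T)^{\perp} \subseteq \add T$ is obtained by taking a left $\add T$-approximation $X \to T_0$ (which exists by the same Hom-finiteness argument applied to $\Hom_{\C}(X, T)$), completing to a triangle, and appealing to Lemma~\ref{wakamatsu}(b) to produce the relevant object in ${}^{\perp}T$, then splitting the identity of $X$ analogously. I do not expect a serious obstacle; the only subtlety is making sure that right and left $\add T$-approximations of arbitrary objects really do exist under our Hom-finite Krull–Schmidt hypothesis, but this follows immediately from finite-dimensionality of $\Hom_{\C}(T, X)$ and $\Hom_{\C}(X, T)$.
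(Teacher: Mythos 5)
Your proof is correct and follows essentially the same route as the paper: one inclusion is immediate, and for the other you complete a right $\add T$-approximation $T_0\to X$ to a triangle, invoke Wakamatsu's Lemma to place the third term in $T^{\perp}$, and use $\Ext^1_{\C}(X,Y)=0$ to split the triangle and exhibit $X$ as a summand of $T_0$ (the paper phrases this as the vanishing of the connecting map rather than lifting $\operatorname{id}_X$, but it is the same argument). Your explicit justification of the existence of the approximation from Hom-finiteness is a standard point the paper leaves implicit.
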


\begin{proof}
We prove the first equality; the proof of the second is dual.
One inclusion is obvious. For the other, let $X$ be in
$^{\perp}(T^{\perp})$, and let $T_0\to X$ be a right $\add
T$-approximation of $X$. Complete it to a triangle $Z \to T_0 \to X \to \Sigma
Z$. Then $Z$ is in $T^{\perp}$ by Lemma~\ref{wakamatsu}, hence by the
assumption on $X$ we have that
$\Hom_{\C}(X,\Sigma T)= 0$,
and the
triangle splits. Therefore $X$ is in $\add T$.
\end{proof}

Consider now the functor $F=\Hom_{\C}(T,-) \colon \C \to
\module \End_{\C}(T)^{\op}$. We first determine which maps are killed and
which maps are inverted by this functor.

\begin{lemma}\label{factors}
Let $f \colon X \to Y$ be a map in $\C$. Then $\Hom_{\C}(T,f) = 0$ if and
only if $f$ factors through $\Sigma T^{\perp}$.
\end{lemma}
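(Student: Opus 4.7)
The ``if'' direction is immediate: if $f$ factors as $X \xrightarrow{u} W \xrightarrow{v} Y$ with $W \in \Sigma T^{\perp} = \X_T$, then $\Hom_{\C}(T,W) = 0$, so $\Hom_{\C}(T,f) = \Hom_{\C}(T,v)\,\Hom_{\C}(T,u)$ is zero.

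For the ``only if'' direction, the plan is to build a natural candidate for the factorisation out of a right $\add T$-approximation of the \emph{source}, not the target. Since $T$ is a single object in a Hom-finite category, every object admits a right $\add T$-approximation (take $T^n \to X$ corresponding to a generating set of $\Hom_{\C}(T,X)$ as an $\End_{\C}(T)$-module). Pick such an approximation $g \colon T_0 \to X$ and complete it to a triangle
$$T_0 \xrightarrow{g} X \xrightarrow{h} W \longrightarrow \Sigma T_0.$$

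The first key step is to verify that $W$ lies in $\X_T$. Applying $\Hom_{\C}(T,-)$ to the triangle yields the exact sequence
$$\Hom_{\C}(T,T_0) \xrightarrow{\Hom(T,g)} \Hom_{\C}(T,X) \longrightarrow \Hom_{\C}(T,W) \longrightarrow \Hom_{\C}(T,\Sigma T_0),$$
where the last term vanishes because $T$ is rigid and $T_0 \in \add T$. Since $g$ is a right $\add T$-approximation, $\Hom_{\C}(T,g)$ is surjective; combining these two facts forces $\Hom_{\C}(T,W) = 0$, i.e.\ $W \in \X_T$.

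The second key step is to exhibit the factorisation. The hypothesis $\Hom_{\C}(T,f) = 0$ means that every map $T \to X$ is annihilated by $f$; applied to the components of $g \colon T_0 \to X$ (viewing $T_0$ as a finite direct sum of copies of $T$), this gives $fg = 0$. By the standard property of triangles, $f$ then factors through $h$, producing $f' \colon W \to Y$ with $f = f'h$. Since $W \in \X_T$, this is the required factorisation.

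There is no serious obstacle here; the only subtlety is the choice to approximate $X$ rather than $Y$, which is dictated by the fact that we need to use rigidity of $T$ (to kill $\Hom_{\C}(T,\Sigma T_0)$) at exactly the place where the approximation gives surjectivity, so that $W$ lands in $\X_T$.
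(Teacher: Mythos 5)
Your proof is correct and is essentially the paper's own argument: both take a right $\add T$-approximation $T_0 \to X$ of the source, complete it to a triangle, and factor $f$ through the cone using $fg=0$. The only cosmetic difference is that you verify the cone lies in $\Sigma T^{\perp}$ directly from the long exact sequence under $\Hom_{\C}(T,-)$, whereas the paper cites the triangulated Wakamatsu Lemma (Lemma~\ref{wakamatsu}) for the same fact.
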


\begin{proof}
It is clear that $\Hom_{\C}(T,f) = 0$ if $f$ factors through
$\Sigma T^{\perp}$.

Now assume $\Hom_{\C}(T,f) = 0$. Consider the right $\add T$-approximation
$T_0 \to X$. Complete to a triangle $$U \to T_0 \to X \to \Sigma U,$$
where by Lemma~\ref{wakamatsu} we have that $U$ is in $T^{\perp}$.

Now we have a diagram
$$
\xymatrix{
U \ar[r] & T_0 \ar[r] & X \ar[d]^{f} \ar[r]  &\Sigma U \\
& & Y &
}
$$
where the composition $T_0 \to X \overset{f}{\to} Y$ vanishes by assumption. Hence
the map $f$ factors through $\Sigma U$, which is in $\Sigma T^{\perp}$.
This completes the proof.
%
\end{proof}

Let $\widetilde{\S}$ be the class of maps $f \colon X \to
Y$ such that when $f$ is completed to a triangle
$$\Sigma^{-1} Z \overset{h}{\to} X \overset{f}{\to} Y \overset{g}{\to} Z$$
both $g$ and $h$ factor through $\Sigma T^{\perp}$.

\begin{lemma} \label{Stildewelldefined}
The class $\widetilde{\S}$ is well-defined.
\end{lemma}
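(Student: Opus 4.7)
The plan is to observe that any two completions of a given morphism $f\colon X\to Y$ to a triangle are isomorphic via an isomorphism compatible with $1_X$ and $1_Y$, and then show that the factorisation property of $g$ and $h$ through $\Sigma T^{\perp}$ transfers along this isomorphism.

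More precisely, suppose we have two completions
$$\Sigma^{-1}Z \overset{h}{\to} X \overset{f}{\to} Y \overset{g}{\to} Z, \qquad \Sigma^{-1}Z' \overset{h'}{\to} X \overset{f}{\to} Y \overset{g'}{\to} Z'.$$
By the standard uniqueness result for triangles in a triangulated category (applied with $u=1_X$, $v=1_Y$), there exists an isomorphism $\phi\colon Z\to Z'$ such that $g' = \phi g$ and $h = h'\circ \Sigma^{-1}\phi$. I will then argue in two symmetric steps. First, if $g$ factors as $Y\to V\to Z$ with $V\in\Sigma T^{\perp}$, then $g' = \phi g$ factors as $Y\to V\to Z\to Z'$, with the middle object still in $\Sigma T^{\perp}$; conversely, $g = \phi^{-1}g'$, so factorisation of $g'$ through $\Sigma T^{\perp}$ implies the same for $g$. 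Second, if $h$ factors as $\Sigma^{-1}Z\to V'\to X$ with $V'\in\Sigma T^{\perp}$, then $h' = h\circ \Sigma^{-1}\phi^{-1}$ factors as $\Sigma^{-1}Z'\to \Sigma^{-1}Z\to V'\to X$, again through an object of $\Sigma T^{\perp}$, and the converse is analogous using $\phi$ in place of $\phi^{-1}$.

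The only thing to check is therefore the routine compatibility of the connecting isomorphism $\phi$ with $g,g'$ and $h,h'$, which is a direct consequence of the (TR3) axiom for triangulated categories. There is no real obstacle here; the lemma is essentially a sanity check that the definition of $\widetilde{\S}$ depends only on $f$ and not on the auxiliary triangle chosen.
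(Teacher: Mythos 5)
Your proof is correct and follows essentially the same route as the paper: obtain a morphism of triangles extending $(1_X,1_Y)$ via (TR3), note the connecting map is an isomorphism, and transfer the factorisation through $\Sigma T^{\perp}$ along it. No issues.
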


\begin{proof}
Let $f \colon X\to Y$ be a map in $\C$ and let
$$\Sigma^{-1}Z \overset{h}{\to} X \overset{f}\to Y \overset{g}{\to} Z$$
and
$$\Sigma^{-1}Z' \overset{h'}{\to} X \overset{f}\to Y \overset{g'}{\to} Z$$
be completions of $f$ to a triangle in $\C$.
Since $\C$ is a triangulated category there is a morphism of triangles
$$\xymatrix{
\Sigma^{-1}Z \ar[d]^{\Sigma^{-1}u} \ar[r]^h & X \ar@{=}[d] \ar[r]^f & Y \ar@{=}[d] \ar[r]^g & Z \ar[d]^{u} \\
\Sigma^{-1}Z' \ar[r]^{h'} & X \ar[r]^{f} & Y \ar[r]^{g'} & Z'
}$$
(see e.g. axiom (TR3) in~\cite[1.1]{happel}).
By~\cite[Prop.\ 1.2(c)]{happel}, $u$ (and therefore also $\Sigma^{-1}u$) is
an isomorphism. It follows that $g$ factors through $\X_T$ if and only if
$g'$ factors through $\X_T$, and similarly for $h$ and $h'$.
\end{proof}

We have the following characterisation of $\widetilde{\S}$:

\begin{lemma}\label{invertingmaps}
A map $f \colon X \to Y$ belongs to $\widetilde{\S}$ if and only
if $\Hom_{\C}(T,f)$ is an isomorphism.
\end{lemma}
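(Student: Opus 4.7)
The plan is to read off the equivalence directly from the long exact sequence obtained by applying $\Hom_{\C}(T,-)$ to the defining triangle of $f$, combined with Lemma~\ref{factors}.

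First, complete $f$ to a triangle
$$\Sigma^{-1}Z \overset{h}{\to} X \overset{f}{\to} Y \overset{g}{\to} Z,$$
and apply the (cohomological) functor $\Hom_{\C}(T,-)$ to obtain the exact sequence
$$\Hom_{\C}(T,\Sigma^{-1}Z) \overset{h_*}{\to} \Hom_{\C}(T,X) \overset{f_*}{\to} \Hom_{\C}(T,Y) \overset{g_*}{\to} \Hom_{\C}(T,Z).$$
By exactness, $f_*$ is surjective if and only if $g_* = 0$, and $f_*$ is injective if and only if $h_* = 0$. Thus $\Hom_{\C}(T,f)$ is an isomorphism if and only if both $\Hom_{\C}(T,g) = 0$ and $\Hom_{\C}(T,h) = 0$.

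Next, I apply Lemma~\ref{factors} to both $g$ and $h$: $\Hom_{\C}(T,g) = 0$ is equivalent to $g$ factoring through $\Sigma T^{\perp}$, and likewise $\Hom_{\C}(T,h) = 0$ is equivalent to $h$ factoring through $\Sigma T^{\perp}$. Since $\X_T = \Sigma T^{\perp}$, the conjunction of these two conditions is precisely the definition of membership in $\widetilde{\S}$ (and by Lemma~\ref{Stildewelldefined} the condition is independent of the choice of completion). This gives the desired equivalence.

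The argument is essentially a one-step unwrapping: the only real input is Lemma~\ref{factors}, which was already established, so there is no significant obstacle. One point to be mildly careful about is the direction of surjectivity versus injectivity — making sure that the "surjectivity" half corresponds to $g$ (the map out of $Y$) and the "injectivity" half to $h$ (the map into $X$), so that the asymmetry in the long exact sequence is matched correctly with the two factorisation conditions defining $\widetilde{\S}$.
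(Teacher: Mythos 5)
Your proof is correct and takes exactly the same route as the paper: the paper's proof is precisely the one-line observation that the claim follows from the long exact sequence obtained by applying $\Hom_{\C}(T,-)$ to the completed triangle, combined with Lemma~\ref{factors}, which is what you have spelled out. Your matching of surjectivity of $f_*$ with $g$ and injectivity with $h$ agrees with the paper's Lemma~\ref{monoepi}, so the asymmetry is handled correctly.
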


\begin{proof}
This follows directly by considering the long exact sequence obtained from applying $\Hom_{\C}(T,-)$ to the triangle
$\Sigma^{-1} Z \to X \to Y \to Z$, in combination with Lemma~\ref{factors}.
\end{proof}

Note that the same proof shows:

\begin{lemma} \label{monoepi}
Let $f \colon X\to Y$ and the completed triangle be as above. Then:
\begin{enumerate}
\item[(a)] The map $h$ factors through $\Sigma T^{\perp}$
if and only if $\Hom_{\C}(T,f)$ is a monomorphism.
\item[(b)] The map $g$ factors through $\Sigma T^{\perp}$
if and only if $\Hom_{\C}(T,f)$ is an epimorphism.
\end{enumerate}
\end{lemma}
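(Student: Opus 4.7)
The plan is to mimic exactly the argument used for Lemma~\ref{invertingmaps}, only now reading off monomorphism/epimorphism information from the long exact sequence rather than isomorphism information. The starting point is to apply $\Hom_\C(T,-)$ to the given triangle, obtaining an exact sequence
$$\Hom_\C(T,\Sigma^{-1}Z) \xrightarrow{\Hom(T,h)} \Hom_\C(T,X) \xrightarrow{\Hom(T,f)} \Hom_\C(T,Y) \xrightarrow{\Hom(T,g)} \Hom_\C(T,Z).$$
Lemma~\ref{factors} tells us that $\Hom_\C(T,h)=0$ iff $h$ factors through $\Sigma T^\perp$, and likewise for $g$; so the task reduces to characterising when $\Hom(T,f)$ is mono/epi in terms of vanishing of $\Hom(T,h)$ and $\Hom(T,g)$ respectively.

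For part (a), note that by exactness the kernel of $\Hom(T,f)$ equals the image of $\Hom(T,h)$. Hence $\Hom(T,f)$ is a monomorphism if and only if $\Hom(T,h)=0$. Combined with Lemma~\ref{factors}, this is equivalent to $h$ factoring through $\Sigma T^\perp$. For part (b), exactness gives that the image of $\Hom(T,f)$ equals the kernel of $\Hom(T,g)$, so $\Hom(T,f)$ is an epimorphism if and only if $\Hom(T,g)=0$, which by Lemma~\ref{factors} is equivalent to $g$ factoring through $\Sigma T^\perp$.

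There is essentially no obstacle: the whole argument is a direct translation of the proof of Lemma~\ref{invertingmaps}, simply observing that an isomorphism of $\End_\C(T)^{\op}$-modules is the conjunction of a mono and an epi, and that these two halves are controlled separately by the two boundary maps $\Hom(T,h)$ and $\Hom(T,g)$ in the long exact sequence. The only thing to double-check is well-definedness of the statement, which is provided by Lemma~\ref{Stildewelldefined} (applied to the factorisation conditions for $h$ and $g$ individually rather than both at once, but the same argument based on the isomorphism of triangles works verbatim).
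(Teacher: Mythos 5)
Your argument is exactly the paper's: the paper proves Lemma~\ref{monoepi} by noting that "the same proof" as for Lemma~\ref{invertingmaps} applies, namely reading off the kernel and cokernel of $\Hom_{\C}(T,f)$ from the long exact sequence and invoking Lemma~\ref{factors}. Your proposal is correct and matches this approach.
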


(Compare with~\cite[Thm. 2.3]{kz}).

\section{Localisation}\label{sl}

In this section, we consider the Gabriel-Zisman localisation of
$\C$ at the class $\widetilde{\S}$ of maps defined above. It turns out,
that in order to study this localisation, it is helpful to consider
localisation at a smaller better-behaved class of maps $\S$ contained
in $\widetilde{\S}$ which has the property that the corresponding
localisation functor also inverts $\widetilde{\S}$. We then investigate
some of the properties of localisation at $\S$ which we shall need to
prove the main result in the next section.

For a class $\M$ of maps in $\C$, the \emph{Gabriel-Zisman
localisation}~\cite[Chapter 1]{gz} $\C_{\M}$ can be defined as follows.
Let the objects in $\C_{\M}$
be the same as the objects in $\C$.
The maps in $\C_{\M}$ are defined as follows (following~\cite[Section 2.2]{krause}).
For each element $m$ in $\M$, introduce an element $x_m$.
This is the formal inverse of $m$.
Then construct an oriented graph $\G$ as follows. The vertices of
$\G$ are the objects of $\C$ and the arrows are the maps in $\C$ together with
the elements $x_m$ for each $m$ in $\M$. Here the orientation of
the arrow corresponding to a map from $X$ to $Y$ is $X \to Y$,
while for an element $m$ in $\M$,
the edge $x_m$ has the same vertices as $m$, but the opposite orientation.
Then the maps in $\C_{\M}$ from $X$ to $Y$ are equivalence
classes of paths in $\G$ starting at $X$ and ending
at $Y$.
The equivalence relation is defined as follows. Consider the
relation given by the following:
\begin{itemize}
\item[-] two consecutive arrows can be replaced by their composition;
\item[-] for $m$ in $\M$, a composition $X \overset{m}{\to} Y
  \overset{x_m}{\to} X$ or a composition  $X \overset{x_m}{\to} Y
  \overset{m}{\to} X$ can be replaced by $X \overset{1_X}{\to} X$
\end{itemize}
and close under reflexivity, symmetry and transitivity to obtain an
equivalence relation.

Note that we have assumed $\C$ to be skeletally small, in order to
avoid set-theoretic problems in forming localisations.

There is a canonical functor
$L_{\M} \colon \C \to \C_{\M}$ with the property that each functor starting in
$\C$ which inverts the maps in $\M$ factors uniquely through $L_{\M}$.

In our situation, by Lemma~\ref{invertingmaps}, this means that there is a unique functor
$G \colon \C_{\widetilde{\S}} \to \module \End_{\C}(T)^{\op}$, making the diagram
$$
\xymatrix{
\C  \ar[dr]_{L_{\widetilde{\S}}} \ar[rr]^(0.35){H =\Hom_{\C}(T,-)}  &  &
\module \End_{\C}(T)^{\op} \\
& \C_{\widetilde{\S}} \ar[ur]_G   &
}
$$
commute, and our main theorem is that this functor
is actually an equivalence. 
However, in order to prove this, it will be convenient to
consider a subclass $\S$ of $\widetilde{\S}$ consisting of maps $f \colon X \to
Y$ such that when $f$ is completed to a triangle
$$\Sigma^{-1}Z \overset{h}{\to} X \overset{f}{\to} Y \overset{g}{\to} Z,$$
$g$ factors through $\Sigma T^{\perp}$, while $\Sigma^{-1}Z$ is in $\Sigma
T^{\perp}$. Note that this class is also well-defined, using a similar
argument to that used for $\widetilde{\S}$ in Lemma~\ref{Stildewelldefined}
and noting that $\X_T$ is closed under isomorphism.
Since $\S$ is contained in $\widetilde{\S}$, it is clear that
$\Hom_{\C}(T,f)$ is an isomorphism for
any map $f \colon X \to Y$ in $\S$, and we get the following diagram:
\begin{equation}
\label{diag}
\xymatrix{
\C \ar[dr]^{L_{\S}} \ar[ddr]_{L_{\widetilde{\S}}} \ar[rr]^(0.35){H= \Hom_{\C}(T,-)}  &  &
\module \End_{\C}(T)^{\op} \\
& \C_{\S} \ar[ur]^{F} \ar[d]^{J}& \\
& \C_{\widetilde{\S}} \ar[uur]_{G}   &
}
\end{equation}
We claim that the diagram commutes. By the universal property of localisation, we have 
$FL_{\S} = H$ and $J L_{\S} = L_{\widetilde{\S}}$. Using in addition that
$G L_{\widetilde{\S}} = H$, it follows that
$G J L_{\S} = F L_{\S}$. 
By the universal property of $L_{\S}$, we have 
$G J  = F $.

Our strategy is to first prove that the functor $F \colon \C_{\S} \to
\module \End_{\C}(T)^{\op}$ is an equivalence. Then, using that both localisation functors
invert exactly the maps in $\widetilde{\S}$, the functor
$\C_{\S} \to \C_{\widetilde{\S}} $ is an equivalence, and our result
follows.

\begin{remark}
The classes of maps $\S$ and $\widetilde{\S}$ do not satisfy
all of the axioms for admitting a calculus of left fractions or a calculus
of right fractions. It is easy to see that the axiom LF3 (and its
counterpart RF3), in the notation of~\cite[Section 3.1]{krause},
are not in general satisfied.
Note that this implies that the corresponding localisation functors
do not admit a right or left adjoint; see \cite[Section 2.3]{krause}.
\end{remark}

We consider the full subcategory
$\C(T)$ of $\C$ consisting of objects
$X$ in $\C$ such that there exists a triangle
$T_1 \to T_0 \to X \to \Sigma T_1$ in $\C$, with $T_0,T_1$ in $\add T$
(see~\cite[Prop. 6.2]{iy},~\cite[Section 5.1]{kr}).
The following characterises the objects in $\C(T)$.

\begin{lemma}\label{ct}
For an object $X$ in $\C$, the following are equivalent.
\begin{itemize}
\item[(a)] $X$ is in $\C(T)$.
\item[(b)] If, in the triangle  $U \to T_0 \overset{f}{\to} X \to
  \Sigma U$, the map $f$ is a right  $\add T$-approximation, then $U$ is also in $\add T$.
\item[(c)]  If, in the triangle  $U \to T_0 \overset{f}{\to} X \to
 \Sigma U$, the map $f$ is a minimal right  $\add T$-approximation, then $U$ is also in $\add T$.
\end{itemize}
\end{lemma}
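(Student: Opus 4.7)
The implications $(b) \Rightarrow (c)$ and $(c) \Rightarrow (a)$ are immediate, so the content of the lemma lies in the remaining two implications $(a) \Rightarrow (c)$ and $(c) \Rightarrow (b)$, both of which I plan to handle by the same mechanism: combining Lemma~\ref{app-lemma}(b) with the fact that direct sums of distinguished triangles are distinguished.

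For $(a) \Rightarrow (c)$, I start from a triangle $T_1 \to T_0' \overset{f'}{\to} X \to \Sigma T_1$ with $T_0', T_1 \in \add T$. Applying $\Hom_{\C}(T,-)$ and using rigidity of $T$ (which forces $\Ext^1_{\C}(T, T_1) = 0$ since $T_1 \in \add T$), the map $\Hom_{\C}(T, T_0') \to \Hom_{\C}(T, X)$ is surjective, so $f'$ is itself a right $\add T$-approximation. Letting $f_{\min} \colon T_{\min} \to X$ denote the minimal right $\add T$-approximation with fiber $U_{\min}$, Lemma~\ref{app-lemma}(b) yields $f' \cong f_{\min} \amalg 0 \colon T_{\min} \amalg T''' \to X$ for some $T''' \in \add T$. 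Taking the direct sum of the distinguished triangle for $f_{\min}$ with the trivial triangle $T''' \overset{1}{\to} T''' \to 0 \to \Sigma T'''$ completing $0 \colon T''' \to 0$, the fiber of $f'$ decomposes as $T_1 \cong U_{\min} \amalg T'''$. Since $\C$ is Krull--Schmidt, $\add T$ is closed under direct summands, so $U_{\min} \in \add T$, which is (c).

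For $(c) \Rightarrow (b)$, the same decomposition works: any right $\add T$-approximation $f \colon T_0 \to X$ is, by Lemma~\ref{app-lemma}(b), isomorphic to $f_{\min} \amalg 0 \colon T_{\min} \amalg T'' \to X$ with $T'' \in \add T$, and the direct-sum-of-triangles argument shows that its fiber is $U \cong U_{\min} \amalg T''$. Both summands lie in $\add T$ (the first by (c), the second since $T''$ is a summand of $T_0 \in \add T$), so $U \in \add T$.

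The main point requiring care is the correct identification of the fiber of $f_{\min} \amalg 0$: the trivial triangle completing $0 \colon T''' \to 0$ has fiber $T'''$ itself (rather than $\Sigma T'''$), so the direct sum of triangles produces $U_{\min} \amalg T'''$ and not a shifted version. The rigidity of $T$ is used precisely once, in $(a) \Rightarrow (c)$, to recognise the given triangle as arising from a right $\add T$-approximation; after that step, the argument is purely formal manipulation in a Krull--Schmidt triangulated category.
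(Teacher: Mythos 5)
Your proof is correct and follows essentially the same route as the paper's: use rigidity and the long exact sequence to see that the triangle witnessing (a) already exhibits a right $\add T$-approximation, then invoke Lemma~\ref{app-lemma}(b) together with the direct-sum-of-triangles and Krull--Schmidt argument to compare the fibres of the various approximations. The paper leaves that last comparison implicit ("the statement now follows from combining this with Lemma~\ref{app-lemma}"); you have simply written it out, including the correct identification of the fibre of $f_{\min}\amalg 0$.
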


\begin{proof}
Assume there is a
triangle
$T_1 \to T_0 \to X \to \Sigma T_1$ in $\C$, with $T_0,T_1$ in $\add
T$. Then, using the fact
that $\Hom_{\C}(T,-)$is a homological functor and that $T$ is rigid, it
is clear that
$T_0 \to X$ is a right $\add T$-approximation.
The statement now follows from combining this with Lemma~\ref{app-lemma}.
\end{proof}

The importance of $\C(T)$ here is due to the following lemma.

\begin{lemma}\label{identify}
Let $Y$ be an object in $\C$. Then there exists a map $f \colon X\to Y$ in $\S$
where $X$ is an object in $\C(T)$.
\end{lemma}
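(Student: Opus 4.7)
The plan is to build such an $f$ from two successive right $\add T$-approximations together with the octahedral axiom. First I would choose a right $\add T$-approximation $b \colon T_0 \to Y$ (which exists because $\Hom_{\C}(T,Y)$ is finite-dimensional, so fix a basis and assemble the components into a map $T_0 \to Y$) and complete to a triangle
$$U \xrightarrow{c} T_0 \xrightarrow{b} Y \to \Sigma U,$$
so that $U \in T^{\perp}$ by Lemma~\ref{wakamatsu}(a). Next I would take a right $\add T$-approximation $a \colon T_1 \to U$ and complete to a triangle $V \to T_1 \xrightarrow{a} U \to \Sigma V$, with $V \in T^{\perp}$ by the same lemma.

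Then I would apply the octahedral axiom to the composition $T_1 \xrightarrow{a} U \xrightarrow{c} T_0$. This yields an object $X$ fitting in two triangles
$$T_1 \to T_0 \to X \to \Sigma T_1 \quad \text{and} \quad \Sigma V \to X \xrightarrow{f} Y \xrightarrow{g} \Sigma^2 V,$$
together with a factorisation of the original approximation $b$ as $T_0 \to X \xrightarrow{f} Y$. The first of these triangles witnesses $X \in \C(T)$.

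It remains to check $f \in \S$. Since $V \in T^{\perp}$ we have $\Hom_{\C}(T,\Sigma V) = 0$, so $\Sigma V \in \X_T = \Sigma T^{\perp}$, handling the condition on $\Sigma^{-1}Z = \Sigma V$. For the condition that $g$ factors through $\Sigma T^{\perp}$, by Lemma~\ref{monoepi}(b) it suffices to show that $\Hom_{\C}(T,f)$ is an epimorphism. This is forced by the octahedral factorisation: the map $\Hom_{\C}(T,T_0) \to \Hom_{\C}(T,Y)$ is surjective because $b$ is a right $\add T$-approximation, and it factors through $\Hom_{\C}(T,X)$ via $T_0 \to X \xrightarrow{f} Y$, so $\Hom_{\C}(T,f)$ is surjective. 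I expect no real obstacle; the only point needing care is to translate the $g$-condition through Lemma~\ref{monoepi}(b) rather than trying to control $\Hom_{\C}(T,\Sigma^2 V)$ directly, over which we have no leverage.
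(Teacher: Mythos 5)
Your proposal is correct and follows essentially the same route as the paper: two successive right $\add T$-approximations followed by the octahedral axiom applied to the composite $T_1 \to U \to T_0$, yielding $X\in\C(T)$ and the triangle $\Sigma V \to X \xrightarrow{f} Y \to \Sigma^2 V$. The only (harmless) variation is that you verify the condition on $g$ via Lemma~\ref{monoepi}(b) and the surjectivity of $\Hom_{\C}(T,b)$, whereas the paper reads off directly from the octahedron that $g$ factors through $\Sigma U \in \Sigma T^{\perp}$ (in your notation); both are valid.
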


\begin{proof}
Let $T_0 \overset{u}{\to} Y$ be a minimal right $\add T$-approximation
of $Y$ and complete to a triangle
$$Z \to T_0 \overset{u}{\to} Y \to \Sigma Z.$$
Let $T_1 \to Z$ be the minimal right $\add T$-approximation of $Z$,
and complete to a triangle
$$\Sigma^{-1} U \to T_1 \to Z \to U.$$
We have that both $Z$ and $\Sigma^{-1} U$ belong to $T^{\perp}$, by Lemma~\ref{wakamatsu}.

Consider now the following diagram, obtained by applying the octahedral
axiom to the composition $T_1 \to Z \to T_0$.
$$
\xymatrix{
\Sigma^{-1} U \ar[r] \ar[d] & T_1  \ar[r]^{w} \ar@{=}[d] & Z \ar[r] \ar[d]^{v}   & U \ar[d] \\
\Sigma^{-1} X \ar[r] & T_1  \ar[r]^{vw} & T_0  \ar[r] \ar[d]^{u} & X \ar[d]^{s} \\
& & Y  \ar@{=}[r] \ar[d] & Y  \ar[d]^{t}\\
& & \Sigma Z  \ar[r] & \Sigma U
}
$$
The map $X \overset{s}{\to} Y$ belongs to $\S$, since $U$ is in
$\Sigma T^{\perp}$ and the map $Y \overset{t}{\to} \Sigma U$ factors
through $\Sigma Z$, which belongs to $\Sigma
T^{\perp}$.
\end{proof}

We consider now the category $\C_{\S}$  obtained by localising $\C$ with respect to
$\S$. For a map $f$ in $\C$, we denote its image in $\C_{\S}$ by
$\underline{f} = L_{\S}(f)$.
We provide a series of lemmas needed for the proof of our main theorem.

First note that we have the following direct consequence of Lemma~\ref{identify}.
\begin{corollary}\label{objects}
Let $Y$ be an object of $\C_{\S}$.  Then there is an object $X$ of
$\C(T)$ such that $X \simeq Y$ in $\C_{\S}$.
\end{corollary}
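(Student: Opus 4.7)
The plan is that this corollary should follow almost immediately from Lemma~\ref{identify} together with the construction of the Gabriel-Zisman localisation, so the proof should be short and essentially formal. The key observation is that $\C_{\S}$ and $\C$ share the same class of objects, and that every map in $\S$ becomes invertible in $\C_{\S}$ by the defining relations of the localisation (the paths $X \overset{s}{\to} Y \overset{x_s}{\to} X$ and $Y \overset{x_s}{\to} X \overset{s}{\to} Y$ are equivalent to the identity paths).

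First I would take the given object $Y$ of $\C_{\S}$ and view it as an object of $\C$, which is legitimate because the localisation does not change the objects. Then I would apply Lemma~\ref{identify} to produce a morphism $f \colon X \to Y$ in $\S$ whose source $X$ lies in $\C(T)$. The element $\underline{f} = L_{\S}(f)$ is a morphism $X \to Y$ in $\C_{\S}$, and by construction of $\C_{\S}$ it has a two-sided inverse, namely the formal inverse $x_f$ introduced when inverting the maps of $\S$.

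Consequently, $\underline{f}$ is an isomorphism in $\C_{\S}$ with inverse $x_f$, which gives the required isomorphism $X \simeq Y$ in $\C_{\S}$ with $X \in \C(T)$. There is no real obstacle here: the content of the statement is entirely packaged in Lemma~\ref{identify}, and the corollary merely translates that statement from ``there exists a morphism in $\S$'' into ``there is an isomorphism in $\C_{\S}$'', which is tautological from the universal property of $L_{\S}$.
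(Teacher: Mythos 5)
Your argument is correct and is exactly the paper's: the authors state the corollary as a direct consequence of Lemma~\ref{identify}, and your spelling out that the map $f\in\S$ supplied by that lemma becomes invertible in $\C_{\S}$ (via its formal inverse) is precisely the implicit step. Nothing further is needed.
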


Note that a priori we do not know if $\C_{\S}$ is additive. This
will follow from our main result. We will need some elementary properties of $\C_{\S}$
and the localisation functor $L_{\S} \colon \C \to \C_{\S}$.

\begin{lemma}\label{elementary}
\begin{itemize}
\item[(a)] For $U$ in $\Sigma T^{\perp}$, the zero map $u_0 \colon U \to 0$
  belongs to
  $\S$. Furthermore, the inverse of $\underline{u_0}$ in $\C_{\S}$ is
  $\underline{u^0}$, where $u^0 \colon 0 \to U$ is the zero map.
\item[(b)] For $U$ in $\Sigma T^{\perp}$, and any object $X$ in $\C$,
the projection $\pi_X \colon X \amalg U \to X$ belongs to
  $\S$. Furthermore, the inverse of $\underline{\pi_X}$ in $\C_{\S}$ is
  $\underline{\iota_X}$, where $\iota_X \colon X \to X \amalg U$ is the inclusion map.
\item[(c)] Let $u \colon X \to Y$ be a map in $\C$ factoring through $\Sigma
T^{\perp}$. Then $\underline{u} = \underline{0}$ in $\C_{\S}$.
\item[(d)] Let $u,v \colon X \to Y$ be maps in $\C$, such that $v$
  factors through $\Sigma T^{\perp}$. Then $\underline{u+v} = \underline{u}$ in $\C_{\S}$.
\end{itemize}
\end{lemma}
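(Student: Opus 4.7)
The plan is to work through (a)--(d) in order, since each later part uses earlier ones, and to exploit two unifying observations: that any object of $\Sigma T^\perp$ becomes isomorphic to $0$ in $\C_\S$, and that split triangles produce maps in $\S$. The main obstacle is (d), where the na\"ive calculation $\underline{u+v}=\underline u+\underline v=\underline u$ is unavailable because $\C_\S$ is not yet known to be additive. I will circumvent this with a cancellation trick using the invertibility of $\underline{\pi_X}$ from (b).

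For (a), I would complete $u_0\colon U\to 0$ to a triangle by rotating the trivial triangle $U\overset{1_U}{\to}U\to 0\to\Sigma U$, obtaining $U\to 0\to\Sigma U\to\Sigma U$. The outgoing map $g\colon 0\to\Sigma U$ is zero, hence factors trivially through $\Sigma T^\perp$, and $\Sigma^{-1}Z=U$ lies in $\Sigma T^\perp$ by hypothesis, so $u_0\in\S$. Since $u_0u^0=1_0$ in $\C$, applying $L_\S$ and composing with the formal inverse $\underline{u_0}^{-1}$ forces $\underline{u^0}=\underline{u_0}^{-1}$. Part (b) is strictly analogous: the split triangle $U\overset{\iota_U}{\to}X\amalg U\overset{\pi_X}{\to}X\overset{0}{\to}\Sigma U$ has $g=0$ and $\Sigma^{-1}Z=U\in\Sigma T^\perp$, so $\pi_X\in\S$, and $\pi_X\iota_X=1_X$ forces $\underline{\iota_X}=\underline{\pi_X}^{-1}$.

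For (c), write $u=qp$ with $p\colon X\to U$, $q\colon U\to Y$ and $U\in\Sigma T^\perp$. From (a), $\underline{u^0}\,\underline{u_0}=\underline{u_0}^{-1}\,\underline{u_0}=1_{\underline U}$, but functoriality of $L_\S$ also gives $\underline{u^0}\,\underline{u_0}=\underline{u^0u_0}=\underline{0_{U\to U}}$. Hence $1_{\underline U}=\underline{0_{U\to U}}$ in $\C_\S$, so $\underline q=\underline q\cdot 1_{\underline U}=\underline q\cdot\underline{0_{U\to U}}=\underline{0_{U\to Y}}$, whence $\underline u=\underline q\cdot\underline p=\underline{0_{X\to Y}}$.

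For (d), factor $v=qp$ as in (c) and assemble the maps $\phi=(u,q)\colon X\amalg U\to Y$ and $\psi=(1_X,p)\colon X\to X\amalg U$. A direct calculation in $\C$ gives $\phi\psi=u+qp=u+v$, $\phi\iota_X=u$, and $\pi_X\psi=1_X=\pi_X\iota_X$. Applying $L_\S$ and left-cancelling the invertible $\underline{\pi_X}$ from (b) yields $\underline\psi=\underline{\iota_X}$ in $\C_\S$, and composing with $\underline\phi$ gives $\underline{u+v}=\underline u$, sidestepping the additivity issue and completing the lemma.
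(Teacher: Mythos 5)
Your proposal is correct throughout. Parts (a)--(c) match what the paper intends (it declares (a) ``straightforward'' and (c) a ``direct consequence of (a)''; your observation that $1_{\underline{U}}=\underline{0_{U\to U}}$ in $\C_{\S}$, obtained by comparing $\underline{u^0}\,\underline{u_0}$ with $\underline{u^0u_0}$, is exactly the right way to fill those in), and your proof of (b) via the split triangle is the paper's proof verbatim. The interesting divergence is in (d). Both arguments route the sum $u+v$ through the biproduct $X\amalg U$ and lean on the invertibility of $\underline{\pi_X}$ from (b), but the paper proceeds by a chain of matrix identities, writing $u+gf$ as $\bigl(\begin{smallmatrix}g&1_Y\end{smallmatrix}\bigr)\bigl(\begin{smallmatrix}f\\u\end{smallmatrix}\bigr)$, inserting the identities $\underline{\iota}\,\underline{\pi}$ on both sides of the middle matrix and invoking (c) to discard the $U$-components. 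Your cancellation trick --- noting that $\pi_X\psi=\pi_X\iota_X$ already holds in $\C$, so that $\underline{\psi}=\underline{\iota_X}$ after left-cancelling the invertible $\underline{\pi_X}$, whence $\underline{u+v}=\underline{\phi}\,\underline{\psi}=\underline{\phi}\,\underline{\iota_X}=\underline{u}$ --- is shorter, avoids the bookkeeping of the matrix manipulation, and does not even use part (c). Both correctly sidestep the point you flag, namely that additivity of $\C_{\S}$ is not yet available; yours does so more transparently.
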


\begin{proof}
Part (a) is straightforward. For (b)
consider the triangle
$$U \to U \amalg X  \overset{\pi_X}{\to }  X \overset{0}{\to}  \Sigma U.$$
Since $U$ is in  $\Sigma T^{\perp}$, we have that $\pi_X$ belongs to
$\S$.
Note that as $\pi_X \iota_X = 1_X$ in $\C$,
we also have that $\underline{\pi_X} \underline{\iota_X} = \underline{1_X}$ in
$\C_{\S}$. This proves the second claim.
Part (c) is a direct consequence of (a). To prove (d), assume $v$ factors
as $X \overset{f}{\to} U \overset{g}{\to} Y$, with $U$ in $\Sigma T^{\perp}$.
Then, using (b), we obtain \begin{multline*} \underline{u+v} =
  \underline{u + gf} =
\underline{\left( \begin{smallmatrix} g  & 1_Y \end{smallmatrix} \right)}
\underline{\left( \begin{smallmatrix} f \\  u \end{smallmatrix}
  \right)} =
\underline{\left( \begin{smallmatrix} g  & 1_Y \end{smallmatrix} \right)}
\underline{\left( \begin{smallmatrix} 1_X &  0 \\ 0 & u \end{smallmatrix}
  \right)}
 \underline{\left( \begin{smallmatrix} f  \\
        1_X \end{smallmatrix} \right)}
= \\
\underline{\left( \begin{smallmatrix} g & 1_Y \end{smallmatrix}
  \right)}
\underline{\iota_Y}
\underline{\pi_Y}
\underline{\left( \begin{smallmatrix} 1_U  & 0\\ 0 & u \end{smallmatrix} \right)}
\underline{\iota_X}
\underline{\pi_X}
\underline{\left( \begin{smallmatrix} f\\ 1_X \end{smallmatrix}
  \right)} =
\underline{1_Y}
\underline{\left( \begin{smallmatrix} 0  & 1_Y  \end{smallmatrix} \right)}
\underline{\left( \begin{smallmatrix} 1_U & 0\\ 0 & u \end{smallmatrix}
  \right)}
\underline{\left( \begin{smallmatrix} 0\\ 1_X \end{smallmatrix}
  \right)}
\underline{1_X}
=
\underline{u}, \end{multline*}
where (c) is used for the second-to-last equality.
\end{proof}

The next lemma will be helpful in simplifying the description of maps in $\C_{\S}$.
We would like to thank Yann Palu for this simplification of
an earlier version of the proof of the lemma.

\begin{lemma}\label{factoring}
Let $U$ be an object in $\C(T)$, let $u \colon U \to Y$ be a map
in $\C$ and let $s \colon X\to Y$ be a map in $\S$. Then $u$ factors
through $s$.
\end{lemma}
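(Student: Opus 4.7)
The plan is to reduce the factoring problem to showing that $gu = 0$, where $g\colon Y\to Z$ is the third map in the triangle $\Sigma^{-1}Z \to X \xrightarrow{s} Y \xrightarrow{g} Z$ coming from $s\in\S$. Once this vanishing is established, exactness of $\Hom_\C(U,-)$ applied to this triangle will produce a lift $v\colon U\to X$ with $sv = u$, which is exactly what we want.

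To see that $gu$ vanishes, I would pick a triangle $T_1 \to T_0 \xrightarrow{\alpha} U \xrightarrow{\delta} \Sigma T_1$ witnessing that $U\in\C(T)$, with $T_0,T_1\in\add T$. The plan then has two short steps. First, I would show $gu\alpha = 0$. Since $s\in\S$, the map $g$ factors through some $V\in\Sigma T^{\perp} = \X_T$, say $g = g_2 g_1$ with $g_1\colon Y\to V$ and $g_2\colon V\to Z$. Because $T_0\in\add T$ and $V\in\X_T$, we have $\Hom_\C(T_0,V)=0$, so $g_1(u\alpha)=0$ and hence $gu\alpha=0$. By the standard long exact sequence arising from $T_1\to T_0\xrightarrow{\alpha} U\xrightarrow{\delta}\Sigma T_1$, this means $gu$ factors as $\eta\delta$ for some $\eta\colon \Sigma T_1\to Z$.

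Second, I would show $\eta = 0$, which forces $gu = 0$. Here I would use the other half of the definition of $\S$: namely $\Sigma^{-1}Z\in\Sigma T^{\perp} = \X_T$, i.e.\ $\Hom_\C(T,\Sigma^{-1}Z)=0$. By adjunction $\Hom_\C(\Sigma T_1,Z)\cong\Hom_\C(T_1,\Sigma^{-1}Z)$, and since $T_1\in\add T$ the latter group vanishes. Thus $\eta=0$, so $gu=0$, so $u$ factors through $s$.

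The step I anticipate needing the most care is not any individual calculation but keeping track of exactly which hypothesis is used where; in particular, the proof genuinely needs the stronger condition defining $\S$ (namely $\Sigma^{-1}Z\in\X_T$, rather than merely $h$ factoring through $\X_T$), because this is what makes $\Hom_\C(\Sigma T_1,Z)$ vanish. This is the main reason for working with $\S$ rather than $\widetilde{\S}$ in this lemma, and it explains why Lemma~\ref{identify} was phrased with $\S$ rather than $\widetilde{\S}$.
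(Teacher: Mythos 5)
Your proposal is correct and follows essentially the same route as the paper: both arguments reduce to showing $gu=0$ by first using $T_0\in\add T$ and the factorisation of $g$ through $\X_T$ to get $gu\alpha=0$, and then using $\Sigma^{-1}Z\in\X_T$ together with $T_1\in\add T$ to kill the induced map $\Sigma T_1\to Z$ (the paper packages this as a morphism of triangles, you as the long exact sequence for $\Hom_\C(-,Z)$, which is the same computation). Your closing remark correctly identifies why the stronger condition defining $\S$ is needed here.
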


\begin{proof}
Since $U$ lies in $\C(T)$, there is a triangle:
$$T_1 \to T_0 \to U \to \Sigma T_1$$
with $T_0,T_1$ in $\add T$.
Complete $s$ to a triangle $\Sigma^{-1}Z \overset{h}{\to} X
\overset{s}{\to} Y \overset{g}{\to} Z$
where $\Sigma^{-1}Z$ lies in $\Sigma T^{\perp}$ and
$g$ factors through $\Sigma T^{\perp}$.
We thus have the diagram:
$$\xymatrix{
T_1 \ar[r] \ar@{.>}[d] & T_0 \ar^f[r] \ar@{.>}[d] & U \ar[r] \ar^u[d] & \Sigma T_1 \ar^v@{.>}[d] \\
\Sigma^{-1}Z \ar^h[r] & X \ar^s[r] & Y \ar^g[r] & Z
}$$
Since $T_0$ lies in $\add T$ and $g$ factors through $\Sigma T^{\perp}$,
$guf=0$ and there are maps $T_1\to \Sigma^{-1} Z$ and $T_0\to X$ giving a map
of triangles as indicated in the diagram. Since $T_1$ lies in $\add T$ and $Z$ lies in $\Sigma^2 T^{\perp}$,
the map $v$ vanishes, so $gu=0$ and $u$ factors through $s$ as required.
\end{proof}


As a consequence of this we obtain the following.

\begin{proposition}\label{surjection}
Let $U,V$ be objects in $\C$ with $U$ in $\C(T)$. Then $\Hom_{\C}(U,V)
\to \Hom_{\C_{\S}}(U,V)$ is surjective.
\end{proposition}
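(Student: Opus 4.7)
My plan is to show that every element of $\Hom_{\C_{\S}}(U,V)$ is the image under $L_{\S}$ of some $\C$-morphism $U\to V$. From the explicit construction of $\C_{\S}$ recalled at the beginning of Section~\ref{sl}, a morphism $\varphi\in\Hom_{\C_{\S}}(U,V)$ is an equivalence class of paths in the graph $\G$ whose edges are $\C$-morphisms and formal inverses $x_s$ of maps $s\in\S$. Using the relation that lets us compose consecutive $\C$-edges (and padding with identity morphisms where needed), I may represent $\varphi$ by a normalised path of the form
\[
U=X_0\xrightarrow{u_1}Y_1\xleftarrow{s_1}X_1\xrightarrow{u_2}Y_2\xleftarrow{s_2}\cdots\xleftarrow{s_n}X_n\xrightarrow{u_{n+1}}V,
\]
in which each $s_i$ lies in $\S$ and each $u_i$ is a morphism of $\C$. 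The proof will proceed by induction on the number $n$ of formal inverse edges in such a representative.

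The base case $n=0$ is immediate: the path collapses to a single $\C$-morphism $U\to V$. For the inductive step, I look at the leftmost inverse edge. The initial segment $U\xrightarrow{u_1}Y_1\xleftarrow{s_1}X_1$ contributes the composite $\underline{s_1}^{-1}\underline{u_1}\colon U\to X_1$ in $\C_{\S}$. Since $U$ lies in $\C(T)$ and $s_1\in\S$, Lemma~\ref{factoring} provides a $\C$-morphism $w\colon U\to X_1$ with $u_1=s_1 w$. Then $\underline{s_1}^{-1}\underline{u_1}=\underline{s_1}^{-1}\underline{s_1}\,\underline{w}=\underline{w}$, so the original path is equivalent in $\C_{\S}$ to the path
\[
U\xrightarrow{w}X_1\xrightarrow{u_2}Y_2\xleftarrow{s_2}\cdots\xleftarrow{s_n}X_n\xrightarrow{u_{n+1}}V.
\]
Composing the two consecutive $\C$-edges $w$ and $u_2$ produces a shorter normalised path with $n-1$ inverse edges and still starting at $U\in\C(T)$, so the induction hypothesis applies and delivers a $\C$-morphism $f\colon U\to V$ with $\underline{f}=\varphi$.

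The point that requires a little care is the normalisation of a representing path, in particular the corner case where the path begins (or ends) with a formal inverse rather than a $\C$-edge; in that situation one simply prepends (respectively appends) an identity $\C$-morphism, which does not affect Lemma~\ref{factoring}. Apart from this bookkeeping, the whole argument is driven by a single application of Lemma~\ref{factoring} per inverse edge, and the hypothesis $U\in\C(T)$ is used precisely to ensure that the lemma remains applicable at the head of the path after each reduction.
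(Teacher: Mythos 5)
Your proof is correct and follows essentially the same route as the paper: the paper also reduces to the configuration $U\xrightarrow{u}Y\xleftarrow{s}X$ with $s\in\S$, applies Lemma~\ref{factoring} to write $u=sh$, and concludes $\underline{s}^{-1}\underline{u}=\underline{h}$, leaving the iteration over the remaining formal inverses implicit. Your version merely makes explicit the normalisation of the representing path and the induction on the number of inverse edges, which the paper summarises with ``by this it is clear''.
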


\begin{proof}
A map in $\C_{\S}$ is a composition of maps in $\C$ and formal
inverses of maps in $\S$. Assume we have a composition
$U \overset{\underline{u}}{\to}Y \overset{\underline{s}^{-1}}{\to} X$
for a map $X \overset{s}{\to} Y$
in $\S$. Then, in $\C$ we have the commutative diagram
$$
\xymatrix{
 & U \ar[d]^{u} \ar@{->}[dl]_{h}   \\
 X\ar[r]^{s}   & Y &
}
$$
where the map $h\colon U {\to} Y$ such that $u=sh$ exists by Lemma~\ref{factoring}.

Now, $\underline{s}^{-1}\underline{u} =
\underline{s}^{-1}\underline{s} \underline{h} = \underline{h}$.

By this it is clear that any map in $\C_{\S}$ from the object $U$ of
$\C(T)$ can be obtained as the
image of a map in $\C$.
\end{proof}


%


\section{Main result}

As before, let $T$ be a rigid object in the triangulated Hom-finite
Krull-Schmidt category $\C$, let $\S$ be as described in
Section~\ref{sl} and let $\widetilde{\S}$ be as defined in
Section~\ref{section-rigid}.
In this section we prove our main result, i.e.\ that
if $\C$ is skeletally small, the localisation of $\C$ at the class
$\widetilde{S}$ is equivalent to $\module \End_{\C}(T)$.

We first consider the canonical functor
$F \colon \C_{\S} \to \module \End_{\C}(T)^{\op}$, as described in
Section \ref{sl}, and prove that it is an equivalence.

\begin{proposition}\label{firstlocal}
The functor $F \colon \C_{\S} \to \module \End_{\C}(T)^{\op}$
is an equivalence.
 \end{proposition}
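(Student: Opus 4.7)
The plan is to show $F$ is essentially surjective and fully faithful, using the fact (from Corollary~\ref{objects}) that every object of $\C_{\S}$ is isomorphic to one in $\C(T)$, and (from Proposition~\ref{surjection}) that every morphism out of an object of $\C(T)$ in $\C_{\S}$ lifts to a morphism in $\C$. These two reductions mean we only have to understand $F$ on morphisms of $\C$ between objects, at least one of which lies in $\C(T)$.

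For essential surjectivity, I would take any $M \in \module \End_{\C}(T)^{\op}$, choose a projective presentation $\Hom_{\C}(T,T_1) \to \Hom_{\C}(T,T_0) \to M \to 0$ with $T_0,T_1 \in \add T$, realise it (by Yoneda) as coming from a map $T_1 \to T_0$ in $\C$, and complete this to a triangle $T_1 \to T_0 \to X \to \Sigma T_1$. By definition $X \in \C(T)$; applying $\Hom_{\C}(T,-)$, using that $T$ is rigid (so $\Ext^{1}_{\C}(T,T_1) = 0$), yields $\Hom_{\C}(T,X) \cong M$, i.e.\ $F(X) \cong M$.

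For faithfulness, let $f \colon X\to Y$ in $\C$ with $X \in \C(T)$ and suppose $F(\underline{f}) = \Hom_{\C}(T,f) = 0$. Lemma~\ref{factors} says $f$ factors through $\Sigma T^{\perp}$, and then Lemma~\ref{elementary}(c) gives $\underline{f} = 0$ in $\C_{\S}$. Combined with Corollary~\ref{objects} and Proposition~\ref{surjection}, this shows $F$ is faithful.

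For fullness I expect the main technical step. Given a module map $\phi \colon \Hom_{\C}(T,X) \to \Hom_{\C}(T,Y)$ with $X \in \C(T)$, choose a triangle $T_1 \to T_0 \overset{p}{\to} X \to \Sigma T_1$ from the definition of $\C(T)$, so that $\Hom_{\C}(T,p)$ is a projective cover. The composite $\phi \circ \Hom_{\C}(T,p)$ is a map $\Hom_{\C}(T,T_0) \to \Hom_{\C}(T,Y)$; since $T_0 \in \add T$, the Yoneda-type identification (using that $\Hom_{\C}(T,-)$ is fully faithful on $\add T$) produces a unique $u \colon T_0 \to Y$ in $\C$ realising it. The composition $T_1 \to T_0 \overset{u}{\to} Y$ is zero after applying $\Hom_{\C}(T,-)$ (by exactness of the projective presentation), hence is zero in $\C$ by the same Yoneda identification, so $u$ factors through $p$ as $u = f\circ p$ for some $f\colon X \to Y$. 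Applying $\Hom_{\C}(T,-)$ and using that $\Hom_{\C}(T,p)$ is surjective, one checks that $\Hom_{\C}(T,f) = \phi$, so $F(\underline{f}) = \phi$. The subtle point is propagating equalities from the module category back to $\C$, which is precisely where rigidity of $T$ and the definition of $\C(T)$ are essential.
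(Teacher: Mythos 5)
Your overall strategy is the paper's own: reduce everything to objects of $\C(T)$ via Corollary~\ref{objects} (really Lemma~\ref{identify}) and Proposition~\ref{surjection}, and establish density and fullness by lifting projective presentations over $\End_{\C}(T)^{\op}$ to triangles $T_1\to T_0\to X\to\Sigma T_1$; those two parts are precisely the content of Lemma~\ref{equiv} in the paper, and your version of the fullness argument (needing only a presentation of the source, with $Y$ arbitrary) is fine.

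There is, however, a genuine gap in your faithfulness step. You prove that a morphism $f$ in $\C$ with $X\in\C(T)$ and $\Hom_{\C}(T,f)=0$ satisfies $\underline{f}=\underline{0}$, and then assert that this, together with the reductions, "shows $F$ is faithful". That inference silently passes from $F(\phi)=F(\psi)$ to $F(\phi-\psi)=0$, which presupposes that $\Hom_{\C_{\S}}(X,Y)$ is an abelian group and that $F$ is additive. But at this point in the argument it is not known that $\C_{\S}$ is additive --- the paper explicitly warns that this is only a consequence of the main theorem, not an input. What you actually need is: if $f',g'\colon X'\to Y'$ are maps in $\C$ with $X'\in\C(T)$ and $H(f')=H(g')$, then $\underline{f'}=\underline{g'}$ in $\C_{\S}$. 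Since $g'-f'$ (a difference formed in $\C$, where it makes sense) factors through $\Sigma T^{\perp}$ by Lemma~\ref{factors}, the required statement is exactly Lemma~\ref{elementary}(d), namely $\underline{u+v}=\underline{u}$ whenever $v$ factors through $\Sigma T^{\perp}$, applied as $\underline{f'}=\underline{f'+(g'-f')}=\underline{g'}$. Part (d) is strictly stronger than part (c) and is proved in the paper by a nontrivial matrix computation inside $\C_{\S}$; citing (c) alone does not close the argument. With this substitution your proof is complete and coincides with the paper's.
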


We recall the following result:

\begin{theorem} [Iyama-Yoshino] \label{Tblanktheorem1}
Let $T$ be a rigid object in a Hom-finite Krull-Schmidt triangulated
category $\C$. Then the functor $\Hom_{\C}(T,-)$ induces an equivalence
$$\C(T)/\Sigma T \to \module \End_{\C}(T)^{\op}.$$
\end{theorem}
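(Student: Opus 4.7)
The plan is to show that $H = \Hom_{\C}(T,-)$, restricted to $\C(T)$, lands in $\module \End_{\C}(T)^{\op}$, descends to the quotient $\C(T)/\Sigma T$, and then becomes an equivalence. The restriction of $H$ to $\add T$ is the Yoneda embedding onto the additive category of finitely generated projective $\End_{\C}(T)^{\op}$-modules. For $X \in \C(T)$ with defining triangle $T_1 \xrightarrow{a} T_0 \xrightarrow{b} X \to \Sigma T_1$, rigidity of $T$ gives $H(\Sigma T_1) = 0$, so applying $H$ yields the exact sequence $H(T_1) \to H(T_0) \to H(X) \to 0$ exhibiting $H(X)$ as a finitely presented module. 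Maps factoring through $\add \Sigma T$ are killed by $H$ (again by rigidity), so $H$ descends to a functor $\bar H \colon \C(T)/\Sigma T \to \module \End_{\C}(T)^{\op}$.

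For faithfulness of $\bar H$, suppose $f \colon X \to Y$ in $\C(T)$ satisfies $H(f)=0$. By Lemma~\ref{factors}, $f$ factors as $X \xrightarrow{g} U \xrightarrow{h} Y$ for some $U \in \X_T = \Sigma T^{\perp}$. Applying $\Hom_{\C}(-,U)$ to the defining triangle of $X$ and noting $\Hom_{\C}(T_0,U)=0$ shows every map $X \to U$ factors through $\Sigma T_1$; hence $g$, and so $f$, factors through $\add \Sigma T$.

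For essential surjectivity, choose a projective presentation $H(T_1) \to H(T_0) \to M \to 0$, lift the left-hand map via Yoneda fullness on $\add T$ to some $a \colon T_1 \to T_0$, and take a completion $T_1 \xrightarrow{a} T_0 \to X \to \Sigma T_1$; then $X \in \C(T)$ and applying $H$ recovers the presentation, so $H(X) \cong M$. For fullness, given $\phi \colon H(X) \to H(Y)$ with $X, Y \in \C(T)$ and corresponding defining triangles (with maps $b \colon T_0 \to X$ and $b' \colon T_0' \to Y$), projectivity of $H(T_0)$ together with surjectivity of $H(b')$ lifts $\phi \circ H(b)$ to a module map $H(T_0) \to H(T_0')$, which by Yoneda fullness on $\add T$ comes from some $\phi_0 \colon T_0 \to T_0'$. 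A second lift produces $\phi_1 \colon T_1 \to T_1'$ satisfying $H(a') H(\phi_1) = H(\phi_0) H(a)$; faithfulness of the Yoneda functor on $\add T$ then promotes this to the equality $a' \phi_1 = \phi_0 a$ in $\C$ itself. Axiom (TR3) now supplies $f \colon X \to Y$ fitting into a morphism of triangles, and surjectivity of $H(b)$ yields $H(f) = \phi$. The main obstacle is precisely this last step: one must secure genuine commutativity in $\C$ before invoking (TR3), which makes the interplay between projectivity (for lifting module maps) and faithfulness (for descending them to $\C$) on $\add T$ the heart of the argument.
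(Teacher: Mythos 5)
Your proof is correct and follows essentially the same route as the paper, which obtains this theorem by combining Lemma~\ref{equiv} (density and fullness via lifting projective presentations to triangles over $\add T$, with kernel identified by Lemma~\ref{factors}) with Lemma~\ref{smallerkernel} (maps out of $\C(T)$ factoring through $\Sigma T^{\perp}$ already factor through $\add\Sigma T$). Your faithfulness and fullness arguments are precisely these two lemmas' proofs, so there is nothing to add.
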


This was proved in~\cite[Prop. 6.2]{iy}. The case where
$\C$ is $d$-Calabi-Yau was proved in~\cite[Section 5.1]{kr}.
In view of Lemma~\ref{factors}, we will need the following variation of
Theorem~\ref{Tblanktheorem1}; the proof is fairly similar.
For an additive category $\C$ and additive subcategory $\C'$, we will use $\C/\C'$ to
denote the quotient of $\C$ by the ideal of maps which factor through $\C'$; it is
also an additive category. For an object $C$ in $\C$, we let
  $\C/C$ denote $\C/\add C$.

\begin{lemma}\label{equiv}
The functor  $H= \Hom_{\C}(T,-)  \colon \C \to \module \End_{\C}(T)^{\op}$
is dense. Its restriction $H'$ to $\C(T)$ is full and
induces an equivalence $\C(T)/ \Sigma T^{\perp} \to \module \End_{\C}(T)^{\op}$.
\end{lemma}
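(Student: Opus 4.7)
The plan is to follow the strategy used for Theorem~\ref{Tblanktheorem1} (Iyama-Yoshino), but with the kernel computation replaced by the sharper Lemma~\ref{factors}, which identifies the maps killed by $H$ as precisely those factoring through $\Sigma T^{\perp}$ rather than through $\add \Sigma T$. The restriction $H|_{\add T}$ is a fully faithful functor onto the category of finitely generated projective $\End_{\C}(T)^{\op}$-modules (standard Yoneda / Hom-finiteness), and this is the tool that drives both density and fullness.

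For density, given $M$ in $\module \End_{\C}(T)^{\op}$, choose a projective presentation $P_1\to P_0\to M\to 0$. Using the equivalence $H|_{\add T}\colon \add T\to \add H(T)$, lift the map $P_1\to P_0$ to a map $a\colon T_1\to T_0$ in $\add T$, and complete to a triangle $T_1\to T_0\to X\to \Sigma T_1$. By definition $X\in \C(T)$, and applying $H$ together with rigidity of $T$ (so $\Hom_{\C}(T,\Sigma T_1)=0$) yields an exact sequence $H(T_1)\to H(T_0)\to H(X)\to 0$ isomorphic to the chosen presentation, giving $H(X)\cong M$.

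For fullness of $H'$, let $X,Y\in \C(T)$ with defining triangles $T_1\to T_0\to X\to \Sigma T_1$ and $T_1'\to T_0'\to Y\to \Sigma T_1'$, and let $\varphi\colon H(X)\to H(Y)$. Applying $H$ to both triangles produces projective presentations of $H(X)$ and $H(Y)$, and projectivity of $H(T_0)$ allows one to lift $\varphi$ to a chain map $H(T_1)\to H(T_1')$, $H(T_0)\to H(T_0')$. Again using $H|_{\add T}$ as an equivalence, this lifts to actual maps $T_0\to T_0'$ and $T_1\to T_1'$ compatible with the differentials, and axiom (TR3) supplies a morphism of the two triangles, producing $f\colon X\to Y$ with $H(f)=\varphi$.

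Combining these, Lemma~\ref{factors} says $H(f)=0$ if and only if $f$ factors through $\Sigma T^{\perp}$, so $H$ descends to a faithful functor $\overline H\colon \C(T)/\Sigma T^{\perp}\to \module \End_{\C}(T)^{\op}$; density and fullness are inherited from $H$ and $H'$ respectively, so $\overline H$ is an equivalence. The main obstacle is the fullness step: turning a morphism of projective presentations into an actual morphism $X\to Y$ in $\C$ is where one genuinely uses the triangulated structure (via (TR3)) and the full faithfulness of $H|_{\add T}$; everything else is either bookkeeping or a direct invocation of Lemma~\ref{factors}.
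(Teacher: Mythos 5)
Your proposal is correct and follows essentially the same route as the paper: density via lifting a projective presentation through the equivalence $\add T\to\add H(T)$ and completing to a triangle, fullness via lifting a morphism of presentations to $\add T$ and invoking (TR3), and the equivalence on the quotient via Lemma~\ref{factors}. No substantive differences.
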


\begin{proof}
Let $M$ be a finite dimensional $\End_{\C}(T)^{\op}$-module and
choose a minimal projective presentation $$P_1 \to P_0 \to M \to 0.$$
Since $\Hom_{\C}(T,-)$ induces an equivalence from $\add T$ to the
finite dimensional projective $\End_{\C}(T)^{\op}$-modules, the map
$P_1 \to P_0$ is the image of a map $T_1 \to T_0$ in $\add T$.
Complete to a triangle
$$T_1 \to T_0 \to X \to \Sigma T_1.$$
Since $\Hom_{\C}(T,\Sigma T_1)= 0$, we get an exact sequence
$$\Hom_{\C}(T,T_1) \to \Hom_{\C}(T,T_0) \to \Hom_{\C}(T,X) \to 0.$$
Hence, it is clear that $M \simeq \Hom_{\C}(T,X)$ and thus that $H$
is dense.

Now, let $X,Y$ be objects in $\C(T)$, and consider a map $\alpha
\colon \Hom_{\C}(T,X)  \to \Hom_{\C}(T,Y)$.
Then there are triangles $T_1 \to T_0 \to X \to \Sigma T_1$ and
$U_1 \to U_0 \to Y \to \Sigma U_1$ whose images under $\Hom_{\C}(T,-)$
give minimal projective presentations for $\Hom_{\C}(T,X)$ and
$\Hom_{\C}(T,Y)$, respectively.

Hence there are vertical maps such that the following diagram
commutes:
$$
\xymatrix{
\Hom_{\C}(T,T_1) \ar[r] \ar[d] & \Hom_{\C}(T,T_0) \ar[r] \ar[d] &
\Hom_{\C}(T,X) \ar[r] \ar[d]^{\alpha} &
0 \\
\Hom_{\C}(T,U_1) \ar[r] & \Hom_{\C}(T,U_0) \ar[r] &\Hom_{\C}(T,Y) \ar[r]
& 0
}
$$
Lifting the left hand square of this diagram to $\add T$, we obtain a
diagram
$$
\xymatrix{
T_1 \ar[r] \ar[d] & T_0 \ar[r] \ar[d] &  X \ar@{.>}[d] \ar[r] & \Sigma
T_1 \ar[d]
 \\
U_1  \ar[r]  & U_0 \ar[r] & Y \ar[r] & \Sigma U_1
}
$$
The induced map $X \to Y$ is mapped to $\alpha$ and hence the functor
$H'$ is full. The statement now follows from Lemma~\ref{factors}.
\end{proof}

\begin{proof}[Proof of Proposition~\ref{firstlocal}.]
First note that for an object $X$ in $\C_{\S}$ we have that $F(X) = H(X) =
\Hom_{\C}(T,X)$. If $f\colon X \to Y$ is a map in $\C$, we have that $F(\underline{f})
= \Hom_{\C}(T,f)$.

We have that $F$ is dense by Lemma~\ref{equiv} and the fact that $H = F L_{\S}$.
We now show that that $F$ is full and faithful. For this
let $X,Y$ be objects in $\C_{\S}$. By Lemma~\ref{identify},
there are objects $X',Y'$ in $\C(T)$, and maps $u \colon X'
\to X$ and $v \colon Y' \to Y$ in $\S$, such that
 $\underline{u} \colon X'
\to X$ and $\underline{v} \colon Y' \to Y$ are isomorphisms.
Let $\beta = F(\underline{u})$, and let $\gamma = F(\underline{v})$.

\noindent {\bf $F$ is full:}

\noindent Let $\alpha \colon FX \to FY$ be an arbitrary map in
$\module \End_{\C}(T)^{\op}$,
and let $\alpha' = \gamma^{-1} \alpha \beta$, i.e.\ we have the
commutative diagram

$$
\xymatrix{
FX'  \ar[r]^{\beta} \ar@{->}[d]^{\alpha'} & FX \ar[d]^{\alpha} \\
FY' \ar[r]^{\gamma} & FY
}
$$

By Lemma~\ref{equiv} $H'$ is dense, so, since $FL_{\S} = H$ and $X'$ lies
in $\C(T)$, there is a map $f' \colon X' \to Y'$ in $\C$ such
that $FL_{\S}(f')= \alpha'$. Hence $F(\underline{f'}) = \alpha'$.

Setting $f = \underline{v}  \underline{f'} \underline{u}^{-1} \colon X \to
Y$ in $\C_{\S}$, we obtain
$F(f) =  F(\underline{v})  F(\underline{f'}) F(\underline{u})^{-1} =
\gamma \alpha' \beta^{-1} = \gamma \gamma^{-1} \alpha' \beta
\beta^{-1} = \alpha$ as required.

\noindent {\bf $F$ is faithful:}

\noindent Let $X,Y$ be objects of $\C_{\S}$, and assume we have maps $f,g\colon
X \to Y$ in $\C_{\S}$ such that $F(f)= F(g)$.
By Proposition~\ref{surjection}, there are $f', g'\colon X'\to Y'$ in
$\C$ such that $\underline{f'} =
\underline{v}^{-1} f \underline{u}$ and $\underline{g'} =
\underline{v}^{-1} g \underline{u}$.

Then \begin{multline*}
F(\underline{f'}) =
F(\underline{v}^{-1} f \underline{u}) =
F(\underline{v})^{-1} F(f) F(\underline{u}) = \\
F(\underline{v})^{-1} F(g) F(\underline{u}) =
F(\underline{v}^{-1} g \underline{u}) = F(\underline{g'}).
\end{multline*}
Hence we have that $H(f') = H(g')$, and by Lemma~\ref{equiv}, this implies
that $g'-f'$ factors through $\Sigma T^{\perp}$ in $\C$.
By Lemma~\ref{elementary} we have that $$\underline{f'} = \underline{f' +
  (g'-f')} = \underline{g'}$$
in $\C_{\S}$, and hence
$f= \underline{v} \underline{f'} \underline{u}^{-1} = \underline{v}
\underline{g'} \underline{u}^{-1}  = g$ in $\C_{\S}$ as required.

Hence $F$ is dense, full and faithful, and we are done.
\end{proof}

By Lemma~\ref{invertingmaps}, we have that $\Hom_{\C}(T,f)$ is invertible if and
only if $f$ belongs to $\widetilde{\S}$.
Since $FL_S =H$ (see Diagram~\eqref{diag}) and $F$
is an equivalence (by Proposition~\ref{firstlocal}), it follows that
$L_S$ inverts the maps in $\widetilde{\S}$. Hence there is an induced
functor $I \colon C_{\widetilde{\S}}\to C_{\S}$ such that
$IL_{\widetilde{S}} = L_{\S}$.
Since $L_{\widetilde{\S}}$ inverts the maps in $\S$ there
is an induced functor $J \colon C_{\S}\to C_{\widetilde{\S}}$ such that
$JL_{\S} =L_{\widetilde{\S}}$. It follows that $IJL_{\S}= L_{\S}$. It
follows from the universal property of $L_{\S}$ that $IJ$
is equal to the identity functor on $C_{\S}$. Similarly $JI$ is equal
to the identity functor on $C_{\widetilde{\S}}$. Hence the induced
functors $I$ and $J$ between $C_{\S}$ and $C_{\widetilde{S}}$ 
are isomorphisms. Our main theorem follows:


\begin{theorem}\label{main}
Let $\C$ be a skeletally small Hom-finite Krull-Schmidt triangulated category with rigid object $T$.
Let $\widetilde{\S}$ be the class of maps defined above. Then the induced functor
$G \colon \C_{\widetilde{\S}} \to \module \End_{C}(T)^{\op}$ is an equivalence.
\end{theorem}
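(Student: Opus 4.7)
The strategy is to piggyback on Proposition~\ref{firstlocal}, which already identifies $\C_{\S}$ with $\module \End_{\C}(T)^{\op}$ via the functor $F$. What remains is purely formal: one shows that the two Gabriel--Zisman localisations $\C_{\S}$ and $\C_{\widetilde{\S}}$ coincide, and then transports the equivalence $F$ along this identification to obtain $G$.

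The key observation is that $L_{\S}$ and $L_{\widetilde{\S}}$ invert exactly the same class of maps, even though $\S$ is a proper subclass of $\widetilde{\S}$. One direction is trivial: since $\S \subseteq \widetilde{\S}$, the functor $L_{\widetilde{\S}}$ inverts every map in $\S$, so the universal property of $L_{\S}$ produces a unique functor $J\colon \C_{\S} \to \C_{\widetilde{\S}}$ with $JL_{\S} = L_{\widetilde{\S}}$. For the non-trivial direction, given $f \in \widetilde{\S}$, Lemma~\ref{invertingmaps} tells us that $H(f) = \Hom_{\C}(T,f)$ is an isomorphism. Since $FL_{\S} = H$ by Diagram~\eqref{diag}, and $F$ is an equivalence by Proposition~\ref{firstlocal}, and equivalences reflect isomorphisms, it follows that $L_{\S}(f)$ is an isomorphism in $\C_{\S}$. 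The universal property of $L_{\widetilde{\S}}$ then yields a unique functor $I\colon \C_{\widetilde{\S}} \to \C_{\S}$ with $IL_{\widetilde{\S}} = L_{\S}$.

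From here one verifies $IJ = \mathrm{id}_{\C_{\S}}$ and $JI = \mathrm{id}_{\C_{\widetilde{\S}}}$ by the standard universal-property argument: the composition $IJ$ satisfies $(IJ)L_{\S} = IL_{\widetilde{\S}} = L_{\S}$, so by the uniqueness clause in the universal property of $L_{\S}$ it must be the identity, and symmetrically for $JI$. Thus $I$ and $J$ are mutually inverse isomorphisms of categories. Since $G$ is the unique functor with $GL_{\widetilde{\S}} = H = FL_{\S} = FIL_{\widetilde{\S}}$, the universal property gives $G = FI$, which is a composition of equivalences and hence an equivalence.

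I do not expect a genuine obstacle at this stage: all the real content has been absorbed into Proposition~\ref{firstlocal} (and, earlier, into Lemmas~\ref{factoring}, \ref{invertingmaps}, and \ref{equiv}). The only point one must not overlook is that the equivalence $F$ is what allows us to push the inverting property of $H$ back to $L_{\S}$; without reflecting isomorphisms through $F$, there is no a priori reason that the smaller class $\S$ should cut out the same localisation as $\widetilde{\S}$.
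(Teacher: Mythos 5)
Your proposal is correct and follows essentially the same route as the paper: both arguments use that $F$ is an equivalence (hence reflects isomorphisms) to deduce that $L_{\S}$ inverts all of $\widetilde{\S}$, then invoke the universal properties to produce mutually inverse functors $I$ and $J$ identifying $\C_{\S}$ with $\C_{\widetilde{\S}}$, so that $G = FI$ is an equivalence. Nothing is missing.
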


\section{The cluster-tilting object case} \label{s:clustertiltingcase}

In this section we compare our results with recent work showing how
to obtain module categories of (opposite) endomorphism algebras of rigid
objects in triangulated categories from the categories themselves.

We call a rigid object $T$ in $\C$ a \emph{cluster-tilting
object} if $\Ext^1_{\C}(T,X)  = 0$ only if $X$ is in $\add T$. Note
that by~\cite[Lemma 3.2]{kz} (or, alternatively, Lemma~\ref{doubleperp}),
it also follows that $\Ext^1_{\C}(X,T)  = 0$ only if $X$ is in $\add T$,
when $T$ is a cluster-tilting object (see also~\cite[Defn. 3.1]{kz}),
so the cluster-tilting objects studied here coincide with the maximal
$1$-orthogonal objects studied in \cite{iy, kz}.
The motivation for our work was the following theorem, which generalises previous
work dealing with Calabi-Yau triangulated categories \cite[Prop. 2.1]{kr} or cluster categories~\cite[Thm. 2.2]{bmr} (note that part (a) was recalled as Theorem~\ref{Tblanktheorem1} above but we repeat it here for comparison).
Also note that we state results here only as they apply to the object
case (rather than for a rigid subcategory).

\begin{theorem} \label{Tblanktheorem2}
Let $T$ be a rigid object in a Hom-finite Krull-Schmidt triangulated
category $\C$.
\begin{itemize}
\item[(a)] [Iyama-Yoshino] The functor $\Hom_{\C}(T,-)$ induces an equivalence
$$\C(T)/ \Sigma T \to \module \End_{\C}(T)^{\op}.$$
\item[(b)] [Koenig-Zhu] If $T$ is a cluster-tilting object then there is
an equivalence
$$\C / \add \Sigma T \to \module \End_{\C}(T)^{\op}.$$
\end{itemize}
\end{theorem}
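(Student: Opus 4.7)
The plan is to deduce part (a) from Lemma~\ref{equiv}, which already gives an equivalence $\C(T)/\Sigma T^{\perp} \to \module \End_{\C}(T)^{\op}$, by replacing $\Sigma T^{\perp}$ with the smaller subcategory $\add \Sigma T$. Part (b) will then follow from part (a) once we show that, when $T$ is a cluster-tilting object, every object of $\C$ lies in $\C(T)$.

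For part (a), rigidity of $T$ gives $\Hom_{\C}(T,\Sigma T)=0$, so $\add \Sigma T \subseteq \Sigma T^{\perp}$, and hence there is a natural full quotient functor $\C(T)/\add \Sigma T \to \C(T)/\Sigma T^{\perp}$. To show it is an equivalence it suffices to establish the reverse inclusion of ideals: any map $f \colon X \to Y$ in $\C(T)$ that factors through some $Z \in \Sigma T^{\perp}$ also factors through $\add \Sigma T$. Since $X \in \C(T)$, take a triangle $S_1 \to S_0 \overset{p}{\to} X \overset{q}{\to} \Sigma S_1$ with $S_0,S_1 \in \add T$. Writing $f = \beta \alpha$ with $\alpha \colon X \to Z$ and $\beta \colon Z \to Y$, the composition $\alpha p \colon S_0 \to Z$ vanishes, because $S_0 \in \add T$ and $\Hom_{\C}(T,Z)=0$. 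So $\alpha$ factors through $q$ as $\alpha = \alpha' q$ for some $\alpha' \colon \Sigma S_1 \to Z$, and $f = (\beta \alpha')q$ factors through $\Sigma S_1 \in \add \Sigma T$. Composing the resulting equivalence with Lemma~\ref{equiv} yields part (a).

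For part (b), assume $T$ is cluster-tilting. Given $X \in \C$, use that $\add T$ is functorially finite in any Hom-finite Krull--Schmidt category over $k$ to pick a right $\add T$-approximation $T_0 \to X$, and complete it to a triangle $U \to T_0 \to X \to \Sigma U$. By Lemma~\ref{wakamatsu}, $U \in T^{\perp}$, so $\Ext^1_{\C}(T,U)=0$; the cluster-tilting hypothesis then forces $U \in \add T$, exhibiting the required $\add T$-presentation of $X$. Hence $\C(T)=\C$, and part (a) delivers the equivalence $\C / \add \Sigma T \simeq \module \End_{\C}(T)^{\op}$.

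The only delicate point is the ideal-comparison argument in part (a): it is precisely where one must use the \emph{presentation} of $X$ by objects of $\add T$ supplied by membership in $\C(T)$, combined with the fact that $\Hom_{\C}(T,Z)=0$ kills every map from an object of $\add T$ into $Z$. Everything else is a direct application of Wakamatsu's Lemma and the defining property of cluster-tilting objects.
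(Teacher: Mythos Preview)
Your proof is correct and follows essentially the same route as the paper: the paper isolates your ideal-comparison argument as a separate lemma (Lemma~\ref{smallerkernel}), proving in the same way that any $f\colon X\to Y$ with $X\in\C(T)$ factoring through $\Sigma T^{\perp}$ already factors through $\add\Sigma T$, and then combines this with Lemma~\ref{equiv} for part~(a); part~(b) is likewise obtained from the observation $\C(T)=\C$ via the identical Wakamatsu argument you give.
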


As mentioned above, part (a) was proved in~\cite[Prop. 6.2]{iy} (the $d$-Calabi Yau case in~\cite[Section 5.1]{kr}). Part (b) was proved in~\cite[Cor. 4.4]{kz}.

Note that to prove (b) from (a), it is sufficient to realise that
$\C(T) = \C$ in the case where $T$ is cluster-tilting. This is well-known; we
repeat the easy proof here for convenience.

\begin{lemma}
If $T$ is a cluster-tilting object in $\C$, then $\C(T) =\C$.
\end{lemma}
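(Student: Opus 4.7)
The plan is to use the characterisation of $\C(T)$ given in Lemma~\ref{ct}(b): it suffices to show that for every $X \in \C$, if $U \to T_0 \to X \to \Sigma U$ is a triangle in which $T_0 \to X$ is a right $\add T$-approximation, then $U \in \add T$. Since $\C$ is Hom-finite Krull-Schmidt and $T$ has only finitely many isomorphism classes of indecomposable summands, right $\add T$-approximations exist for every object (one can take the evaluation map $T^n \to X$ corresponding to a $k$-basis of $\Hom_\C(T,X)$).

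First, given an arbitrary $X \in \C$, pick such a right $\add T$-approximation $T_0 \to X$ and complete to a triangle $U \to T_0 \to X \to \Sigma U$. By Wakamatsu's Lemma (Lemma~\ref{wakamatsu}(a)), $U$ lies in $T^\perp = (\add T)^\perp$, so in particular $\Ext^1_\C(T,U) = 0$.

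Next, invoke the cluster-tilting hypothesis on $T$: the vanishing $\Ext^1_\C(T,U) = 0$ forces $U \in \add T$. Combined with the triangle $U \to T_0 \to X \to \Sigma U$ with both $U,T_0 \in \add T$, this exhibits $X$ as an object of $\C(T)$ via Lemma~\ref{ct}(b). Since $X$ was arbitrary, $\C(T) = \C$.

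There is no real obstacle here; the only mild point is noting the existence of right $\add T$-approximations, which is a standard consequence of Hom-finiteness and Krull-Schmidt. The proof is essentially a direct application of Wakamatsu's Lemma together with the defining property of cluster-tilting objects.
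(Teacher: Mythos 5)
Your proof is correct and follows essentially the same route as the paper: complete a right $\add T$-approximation $T_0 \to X$ to a triangle, apply Wakamatsu's Lemma to place the third term in $T^{\perp}$, and use the cluster-tilting hypothesis to conclude $T^{\perp} = \add T$. The extra remarks on the existence of approximations and the appeal to Lemma~\ref{ct}(b) are fine but not needed beyond what the paper's one-line argument already contains.
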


\begin{proof}
Let $C$ be in $\C$ and consider the triangle
$$U \to T_0 \to C \to \Sigma U,$$
obtained by completing
a right $\add T$-approximation $T_0 \to C$.
Then $U$ lies in $T^{\perp} = \add T$ by Lemma \ref{wakamatsu}.
\end{proof}


In this section, we point out how Theorem \ref{Tblanktheorem2} relates to
our main theorem.

%
%


We first give a description of the maps with domain in $\C(T)$
which lie in the kernel of $\Hom_{\C}(T,-)$. We would like to thank
Yann Palu for his short proof of this lemma which replaces the
longer version in an earlier version of the paper.

\begin{lemma}\label{smallerkernel}
Let $X,Y$ be objects in $\C$, with $X$ in $\C(T)$. Suppose that $f
\colon X \to Y$ factors through $\Sigma T^{\perp}$. Then $f$ factors
through $\add \Sigma T$.
\end{lemma}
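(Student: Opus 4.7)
The plan is to factor the map $f$ through $\Sigma T_1$, where $T_1$ comes from the defining triangle for $X$ as an object of $\C(T)$. The starting point is to write $f = b \circ a$ with $a \colon X \to U$ and $b \colon U \to Y$, where $U \in \Sigma T^{\perp}$; our goal reduces to showing that $a$ alone factors through some object of $\add \Sigma T$.

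Since $X$ lies in $\C(T)$, we have a triangle
$$T_1 \to T_0 \to X \overset{c}{\to} \Sigma T_1$$
with $T_0, T_1 \in \add T$. First I would apply $\Hom_{\C}(-,U)$ to this triangle, obtaining the exact sequence
$$\Hom_{\C}(\Sigma T_1, U) \to \Hom_{\C}(X, U) \to \Hom_{\C}(T_0, U).$$
Next I would observe that because $U \in \Sigma T^{\perp} = \X_T$, we have $\Hom_{\C}(T, U) = 0$, and hence $\Hom_{\C}(T_0, U) = 0$ since $T_0 \in \add T$. Consequently the map $\Hom_{\C}(\Sigma T_1, U) \to \Hom_{\C}(X, U)$ is surjective, so $a = \phi \circ c$ for some $\phi \colon \Sigma T_1 \to U$. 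Then $f = b \circ \phi \circ c$ factors through $\Sigma T_1 \in \add \Sigma T$, as required.

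The one small point to double-check is the identification $\Sigma T^{\perp} = \X_T$, which is immediate from the fact that $T^{\perp} = \{V : \Hom_{\C}(T,\Sigma V) = 0\}$. There is no real obstacle here; the key insight is simply that the $\C(T)$ hypothesis provides a distinguished triangle whose middle term $T_0$ has no nonzero maps into $U$, which forces $a$ to factor through the connecting map into $\Sigma T_1$.
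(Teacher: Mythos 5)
Your proof is correct and follows essentially the same route as the paper: both use the triangle $T_1 \to T_0 \to X \overset{h}{\to} \Sigma T_1$ coming from $X \in \C(T)$ together with the vanishing $\Hom_{\C}(T_0,U)=0$ for $U \in \Sigma T^{\perp}$ to factor through $h$. The only cosmetic difference is that the paper applies $\Hom_{\C}(-,Y)$ and factors $f$ itself through $h$, whereas you apply $\Hom_{\C}(-,U)$ and factor the first half $a$ of $f$ through $h$; these are interchangeable.
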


\begin{proof}
Since $X$ lies in $\C(T)$, there is a triangle:
$$T_1\rightarrow T_0 \overset{g}{\rightarrow} X \overset{h}{\rightarrow} \Sigma T_1,$$
with $T_0$, $T_1$ in $\add T$. Since $f\colon X \to Y$ factors through
$\Sigma T^{\perp}$ and $T_0$ lies in $\add T$, $fg=0$, so $f$ factors through
$h$ and hence through $\add \Sigma T$, since $T_1$ lies in $\add T$.
\end{proof}
%
%

Combining Lemmas~\ref{smallerkernel} and~\ref{equiv}, we obtain part (a) of
Theorem \ref{Tblanktheorem2}. Part (b) follows from the observation that $\C(T) =
\C$, when $T$ is a cluster-tilting object.

Summarising, we have the following:

\begin{theorem}
There are equivalences of categories
$$\C(T)/ \Sigma T \to \C(T)/ \Sigma T^{\perp}  \to
\module \End_{\C}(T)^{\op}  \to \C_{\widetilde{\S}}$$
\end{theorem}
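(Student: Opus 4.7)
The plan is to verify each of the three arrows in the displayed chain as an equivalence; all three are essentially direct consequences of the work already developed.

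The first arrow I would realise as the canonical projection functor $\C(T)/\Sigma T \to \C(T)/\Sigma T^{\perp}$. This is well-defined: since $T$ is rigid, $\Hom_{\C}(T,\Sigma T)=0$, so $\add\Sigma T \subseteq \Sigma T^{\perp}$, and therefore any map factoring through $\add \Sigma T$ also factors through $\Sigma T^{\perp}$. The projection is the identity on objects (hence dense) and surjective on Hom-sets (hence full). The one nontrivial point is faithfulness: if $f\colon X\to Y$ is a map with $X,Y\in\C(T)$ and $f$ factors through $\Sigma T^{\perp}$, then by Lemma \ref{smallerkernel} $f$ in fact factors through $\add\Sigma T$, so $f$ is already zero in $\C(T)/\Sigma T$.

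The second arrow I would take to be the functor induced by $\Hom_{\C}(T,-)$; it is well-defined on the quotient by $\Sigma T^{\perp}$ by Lemma \ref{factors}, and it is an equivalence by Lemma \ref{equiv}. The third arrow is obtained by choosing a quasi-inverse of the equivalence $G\colon \C_{\widetilde{\S}} \to \module\End_{\C}(T)^{\op}$ supplied by our main Theorem \ref{main}. Composing the three equivalences yields the displayed chain.

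The main (and only) substantive obstacle is the faithfulness of the first arrow, which is precisely the content of Lemma \ref{smallerkernel}; once that lemma is available everything assembles from results already in hand, so there is no further technical difficulty.
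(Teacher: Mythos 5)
Your proposal is correct and follows essentially the same route as the paper: the paper's proof likewise cites Lemma~\ref{smallerkernel} for the first arrow, Lemma~\ref{equiv} for the second, and Theorem~\ref{main} for the third. Your additional spelling-out of why the first projection is well-defined, full, dense and faithful is accurate and just makes explicit what the paper leaves implicit.
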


\begin{proof}
The first equivalence follows from Lemma~\ref{smallerkernel}, the second is
Lemma~\ref{equiv}, while the third is our main result, Theorem~\ref{main}.
\end{proof}

Assume $T$ is a cluster-tilting object in $\C$, so that we have $\C = \C(T)$. Let us finish by pointing out
that the equivalence obtained by the composition
$\C/ \Sigma T^{\perp}  \to
\module \End_{\C}(T)^{\op}  \to \C_{\widetilde{\S}}$ in the above
theorem, has a natural interpretation.

Let $Q \colon \C \to \C / \Sigma T^{\perp}$ be the canonical quotient
functor, and $H_0 \colon \C / \Sigma T^{\perp} \to \module \End_{\C}(T)^{\op}$
the induced functor. We
then have $H_0 Q = H =  G L_{\widetilde{\S}}$.
It is clear that $Q$ maps $\Sigma T^{\perp} $ to $0$, and is universal
among additive functors with respect to this property.
Moreover, by Lemma~\ref{invertingmaps} in combination with the
fact that $H_0$ is an equivalence, it is clear that $Q$ inverts the
maps in $\widetilde{\S}$.

On the other hand, $L_{\widetilde{\S}}$ is universal with respect to
inverting $\widetilde{\S}$, and it is clear that $L_{\widetilde{\S}}$
maps  $\Sigma T^{\perp} $ to $0$. Moreover, $L_{\widetilde{\S}}$ is additive
with respect to the additive structure on $\C_{\widetilde{\S}}$ induced
by the equivalence $G$.

Combining these two facts, we obtain canonical functors
$U \colon  \C/\Sigma T^{\perp} \to \C_{\widetilde{\S}}$, and
$V \colon \C_{\widetilde{\S}} \to  \C/\Sigma T^{\perp} $. Arguing as
for diagram \eqref{diag}, and using that $Q$ is full, we obtain a
commutative diagram

$$
\xymatrix{
\C \ar[dr]^{Q} \ar[dddr]_{L_{\widetilde{\S}}} \ar[rr]^(0.4){H= \Hom_{\C}(T,-)}  &  &
\module \End_{\C}(T)^{\op} \\
& \C/ \Sigma T^{\perp} \ar@<0.4ex>[dd]^U \ar[ur]^{H_0}  & \\
& & \\
& \C_{\widetilde{\S}} \ar@<0.4ex>[uu]^V \ar[uuur]^{G}   &
}
$$

By universality, it follows that $U$ is an isomorphism and that
$V$ is the inverse of $U$.


\section{Cotorsion pairs}

Nakaoka~\cite{nakaoka} considers the notion of a \emph{cotorsion pair}
in a triangulated category. In this section we compare the results obtained here
to those of Nakaoka.

According to~\cite[2.3]{nakaoka} a cotorsion pair can be
defined as a pair $(\U,\V)$ of full additive subcategories satisfying
\begin{enumerate}
\item[(a)] $U^{\perp}=V$;
\item[(b)] ${}^{\perp}V=U$;
\item[(c)] For any object $C$, there is a (not necessarily unique) triangle:
$$U \to C \to \Sigma V \to \Sigma U,$$
with $U\in \U$ and $V\in \V$.
\end{enumerate}

Nakaoka points out that $(\U,\V)$ is a cotorsion pair if and only
if $(\U,\Sigma \V)$ is a torsion theory in the sense of~\cite[2.2]{iy}.
This is not the same as a torsion theory in the sense
of~\cite[Defn.\ 2.1]{br}, since there is no assumption of closure under
the suspension functor. It follows from Wakamatsu's Lemma
(see Lemma~\ref{wakamatsu}) and Lemma~\ref{doubleperp}
that, in our context, $(\add T,T^{\perp})$ is a torsion pair.

Nakaoka proves the following theorem:

\begin{theorem}
Let $\C$ be a triangulated category and $(\U,\V)$ a cotorsion pair in
$\C$. Let $\W=\U\cap \V$.
Let $\C^+$ be the full subcategory of $\C$ consisting of objects $C$
such that there is a distinguished triangle
$$W\to C \to \Sigma^{-1} V \to \Sigma W$$
in $\C$ with $W\in \W$ and $V\in \V$.
Let $\C^-$ be the full subcategory of $\C$ consisting of objects $C$
such that there is a distinguished triangle
$$\Sigma^{-1}U\to C \to W\to U$$
in $\C$ with $U\in \U$ and $W\in \W$.
Then $\C^+\cap \C^-$ contains $\U\cap \V$ and
$$\H \colon =\frac{\C^+\cap \C^-}{\U\cap \V}$$
is abelian.
\end{theorem}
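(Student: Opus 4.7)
The plan is to verify the abelian category axioms for $\H$ in sequence. The main work goes into constructing kernels and cokernels via the cotorsion approximations and verifying that the canonical morphism from coimage to image is an isomorphism.

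First I would observe that $\U \cap \V \subseteq \C^+ \cap \C^-$: for any $W \in \W$, the split triangles $W \to W \to 0 \to \Sigma W$ and $0 \to W \to W \to 0$ exhibit $W$ as an object of $\C^+$ (take $V = 0$) and $\C^-$ (take $U = 0$) respectively. The quotient $\H$ inherits an additive structure from $\C$: since $\W$ is additive the quotient is well-defined; $\C^+ \cap \C^-$ is closed under finite direct sums by taking componentwise direct sums of the defining triangles, so biproducts exist in $\H$; and the class of any $W \in \W$ is a zero object.

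For kernels and cokernels, given $f \colon A \to B$ with $A, B \in \C^+ \cap \C^-$, I would complete $f$ to a triangle $\Sigma^{-1} C \to A \xrightarrow{f} B \xrightarrow{g} C$ in $\C$. Since $C$ need not lie in $\C^+ \cap \C^-$, I would choose a cotorsion triangle $U \to C \to \Sigma V \to \Sigma U$ with $U \in \U$, $V \in \V$ and apply the octahedral axiom to the composition $B \xrightarrow{g} C \to \Sigma V$ to produce an object $\widetilde{C} \in \C^+ \cap \C^-$ together with a morphism $B \to \widetilde{C}$; one then checks that the class of this morphism in $\H$ has the required universal property of a cokernel. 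Kernels are constructed dually, starting from a cotorsion resolution of $\Sigma^{-1} C$ and using the rotated triangle.

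The main obstacle, and the core of the theorem, is showing that the canonical morphism from coimage to image is an isomorphism in $\H$. The essential technical point is an analysis of morphisms involving objects of $\U$ and $\V$ modulo $\W$: using the perpendicularity relations $\U^{\perp} = \V$ and ${}^\perp \V = \U$ together with the cotorsion triangles, one shows that enough morphisms vanish in $\H$ to force a coherent factorization $A \to I \to B$ of $f$ with $I \in \C^+ \cap \C^-$ whose class simultaneously realises $\coker(\ker f)$ and $\ker(\coker f)$. The difficulty is not in formal verification but in orchestrating the sequence of octahedra so that all intermediate objects actually land in $\C^+ \cap \C^-$ rather than merely in $\C$, and in tracing through the universal properties after quotienting by $\W$. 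The hypotheses $(a)$ and $(b)$ of the cotorsion pair are used precisely here: they force the relevant triangles arising from the octahedral constructions to split off the required summands in $\U$ or $\V$, so that the middle object $I$ is recognisable as both the coimage and the image.
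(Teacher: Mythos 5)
This theorem is not proved in the paper at all: it is Nakaoka's theorem, quoted verbatim from \cite{nakaoka} in Section~6 purely for comparison with the localisation approach, so there is no internal proof to measure your attempt against. Judged on its own terms, your proposal correctly identifies the standard strategy (the one Nakaoka actually follows): establish the additive structure of $\H$, build kernels and cokernels by truncating the cone of a morphism back into $\C^+\cap\C^-$ using the cotorsion triangles, and then prove that the canonical map from coimage to image is invertible. The easy parts you do carry out are fine, e.g.\ the split triangles $W\to W\to 0\to \Sigma W$ and $0\to W\to W\to 0$ showing $\W\subseteq \C^+\cap\C^-$, and closure under direct sums.

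However, the proposal is an outline rather than a proof, and the gaps sit exactly where the content of the theorem lies. First, in the cokernel construction you assert that one application of the octahedral axiom to $B\to C\to \Sigma V$ ``produces an object $\widetilde{C}\in\C^+\cap\C^-$''; this is not automatic. The cone of a morphism between objects of $\C^+\cap\C^-$ need not lie in $\C^+\cap\C^-$, and the truncation must be shown to land in \emph{both} subcategories, which in general requires separate arguments for the $\C^+$ and $\C^-$ conditions (using that $\U^{\perp}=\V$ and ${}^{\perp}\V=\U$) together with a verification of the universal property in the quotient by $\W$, where one must control which morphisms become zero. None of this is done. Second, and more seriously, the coimage-equals-image step --- which you yourself identify as ``the core of the theorem'' --- is not proved at all: you describe what would need to happen (``orchestrating the sequence of octahedra so that all intermediate objects actually land in $\C^+\cap\C^-$'') without exhibiting the factorisation $A\to I\to B$, proving $I\in\C^+\cap\C^-$, or verifying the two universal properties. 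As it stands the argument establishes only that $\H$ is a well-defined additive category containing $\W$ as zero objects; everything beyond that is asserted. To repair it you would need to supply the explicit truncation triangles and the vanishing lemmas modulo $\W$, essentially reproducing the technical body of \cite{nakaoka}.
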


If $(\U,\V)=(\add T,T^{\perp})$, where $T$ is a rigid object in $\C$
then it is easy to check that
$\C^+=\C$ and $\C^-=\Sigma^{-1}\C(T)$. Thus, in this case, Nakaoka's
result produces the subfactor abelian category
$\module \End_{\C}(T)^{\op}$ as in the Theorem~\ref{Tblanktheorem2}(a)
of Iyama-Yoshino, whereas in our approach we produce this category via
localisation.

Nakaoka already points out that the special case where
$\U=\V$ is a cluster-tilting object (or, more generally, a
cluster-tilting subcategory) recovers a result of Koenig-Zhu~\cite[3.3]{kz}
(see Theorem~\ref{Tblanktheorem2}(b)).

It is thus a natural question to ask whether the subfactor abelian category
$\H$ in Nakaoka's theorem can be obtained from the triangulated category via
localisation. However, the methods of this paper do not apply in this
situation since an appropriate generalisation of Lemma~\ref{equiv} does
not hold in general for a cotorsion pair.

In a sequel to this article~\cite{bm11}, we provide an alternative approach
to showing that $\C_{\widetilde{S}}$ is abelian,
by first considering the factor category $\C/{\X_T}$.
We prove that this category is preabelian, and that the family of regular 
maps $\R$ admits a calculus of left/right fractions, and moreover that we 
can recover $\module \Gamma$ by localising with respect to $\R$.
We note that although this approach is more closely related to the
work of Koenig-Zhu~\cite{kz} and Nakaoka~\cite{nakaoka}, it still does
not apply in the more general cotorsion theory set-up considered by Nakaoka.

\section{Examples}
In this section we give two examples to illustrate the main result. We
refer to~\cite{ars} for background on finite dimensional algebras and
their representation theory. The first example includes a map which
lies in $\widetilde{\S}$ but not in $\S$. In the second example, $\C$
does not contain any cluster-tilting objects; it is also interesting
to see how a module category of finite representation type arises from
the localisation of a cluster category with infinitely many indecomposable
objects. We also see that it is possible that the image of a indecomposable
module under localisation at $\widetilde{\S}$ can be decomposable.

\begin{example} \rm
Let $Q$ be the quiver:
$$\xymatrix{
1\ar[r] & 2\ar[r] & 3\ar[r] & 4.
}$$
The indecomposable modules over $kQ$ are determined by their
support on the vertices of $Q$. We denote by $M_{ij}$ the indecomposable
with support $\{i,i+1,\ldots ,j\}$. Let $T$ be the rigid module $T_1\amalg T_2\amalg T_3$,
where $T_1=M_{44}$, $T_2=M_{14}$ and $T_3=M_{11}$. Regarded as an object in the cluster
category $\C$ corresponding to $kQ$, $T$ is rigid, and it is easy to check that
$\End_{\C}(T)^{\op}$ is isomorphic to the algebra $\Lambda$ given by the following quiver with
a single relation:
$$\xymatrix{
1  & 2 \ar[l]^{}="a" & 3. \ar[l]^{}="b" \ar@{.}@/^9pt/ "a";"b"
}$$
The indecomposable $\Lambda$-modules are the simple modules $S_1,S_2,S_3$ and the
modules $\begin{matrix} S_2 \\ S_1 \end{matrix}$ and
$\begin{matrix} S_3 \\ S_2 \end{matrix}$
with dimension vectors $(1,1,0)$ and $(0,1,1)$ respectively.
If $M$ is any object in $\C$ then $$G(L_{\widetilde{\S}}(M))= H(M)=
\Hom_{\C}(T,M).$$
This enables us to compute the $\End_{\C}(T)^{\op}$-module $G(M)$
corresponding to each indecomposable object $M$ in $\C$.

The images of the indecomposable objects of $\C$ under $\Hom_{\C}(T,-)$
and the images of the irreducible maps between them are given in their
corresponding positions in the Auslander-Reiten quiver of
$\C$ below (drawn with the images of the
indecomposable projective modules on the left hand side, repeated on the
right hand side). 
$$\xymatrix@!=0.2cm{
&&& {\begin{matrix} S_2 \\ S_1 \end{matrix}} \ar@{>>}[dr] && 0 \ar[dr] && S_1  \ar@{^{(}->}[dr] \\
&& S_1 \ar[dr] \ar@{^{(}->}[ur] && S_2 \ar^{\simeq}[dr] \ar[ur] && 0 \ar[ur] \ar[dr] && S_1 \amalg S_3 \ar@{>>}[dr] \\
& S_1 \amalg S_3 \ar@{>>}[dr] \ar@{>>}[ur] && 0 \ar[dr] \ar[ur] && S_2 \ar[ur] \ar@{^{(}->}[dr] && S_3 \ar[dr]  \ar@{^{(}->}[ur] && S_1 \ar@{^{(}->}[dr] \\
S_1 \ar@{^{(}->}[ur] && S_3 \ar[ur] && 0 \ar[ur] && {\begin{matrix} S_3 \\ S_2 \end{matrix}} \ar@{>>}[ur] && 0 \ar[ur] &&
{\begin{matrix} S_2 \\ S_1 \end{matrix}}
}$$

Thus we see that $\Hom_{\C}(T,-)$ is dense, as predicted by
Lemma~\ref{equiv}. We note that an indecomposable object in $\C$ can be sent
to a decomposable object by $\Hom_{\C}(T,-)$. We also remark that, in this example, all of
the irreducible maps in $\module \End_{\C}(T)^{\op}$ are of the form $\Hom_{\C}(T,u)$
where $u$ is irreducible in $\C$.

The right minimal almost split map $s \colon M_{44}\amalg \Sigma M_{24}\rightarrow M_{34}$ can be completed
to the triangle
$$\Sigma M_{34}\rightarrow M_{44}\amalg \Sigma M_{24}\rightarrow M_{34}\rightarrow M_{13}.$$
Since $\Sigma M_{34}$ lies in $\Sigma T^{\perp}$ and the map $M_{34}\rightarrow M_{13}$ factors
through $M_{23}\in \Sigma T^{\perp}$, it follows that $s\in S$ and thus is inverted by
the localisation functor $L_{\widetilde{\S}}$. Under the
functor $H=\Hom_{\C}(T,-)$,
the map
$s$ is mapped to an isomorphism $$\underline{s} \colon S_1\amalg S_3\rightarrow S_1\amalg S_3.$$

We note that the left minimal almost split map $t \colon M_{34}\rightarrow M_{33}\amalg M_{24}$ lies in
$\widetilde{\S}$ but not in $\S$. It is also inverted by $L_{\widetilde{\S}}$.
\end{example}

\begin{example} \rm
In this example we assume that $k$ is algebraically closed.
We consider the path algebra of the following quiver $Q$ of
type $\widetilde{A}_2$:
$$\xymatrix@C=0.3cm@R=0.1cm{
& 2 \ar[dr] & \\
1 \ar[ur] \ar[rr] && 3
}$$
The simple module $S_2$ is a rigid module at the mouth of a tube $\T$ of rank $2$ in the module
category of this quiver, giving rise to a rigid object $T$ in the cluster category $\C$.
Then $\Gamma=\End_{\C}(T)^{\op}$ is given by a quiver with a single vertex and a loop whose square
is zero. The indecomposable $\Gamma$-modules are the uniserial projective module, $P$,
and the simple module $S$. Note that $\Hom_{\C}(T,-)$ kills all of the objects in the
homogeneous tubes. The images of the other indecomposable objects in $\C$ and the
images of the irreducible maps between them under $\Hom_{\C}(T,-)$ are given in their
corresponding positions in the Auslander-Reiten quiver below.

\begin{tabular}{m{6cm}m{6cm}}
$\xymatrix@!=0.1cm{
&&\vdots && \\
&S \ar@{^{(}->}[dr] && S \ar[dr] & \\
0 \ar@{.}[u] \ar[ur] \ar[dr] && S\amalg S \ar@{>>}[dr] \ar@{>>}[ur] && 0 \ar@{.}[u] \\
&S \ar@{^{(}->}[ur] \ar[dr] && S \ar@{^{(}->}[dr] \ar[ur] \\
S\amalg S \ar@{.}[uu] \ar@{>>}[dr] \ar@{^{(}->}[ur] && 0 \ar[ur] \ar[dr] && S\amalg S \ar@{.}[uu] \\
&S \ar@{^{(}->}[dr] \ar[ur] && S \ar@{^{(}->}[ur] \ar[dr] \\
0 \ar@{.}[uu] \ar[ur] && P \ar@{>>}[ur] && 0 \ar@{.}[uu]
}$
&
$\xymatrix@C=0.12cm@R=0.12cm{
&& S \ar^{\simeq}[rr] \ar[ddr] && S \ar^{\simeq}[rr] \ar[ddr] &&
S \ar^{\simeq}[rr] \ar[ddr] && S & \\
\cdots  &&&&&&&&& \cdots \\
& 0 \ar[rr] \ar[uur] && 0 \ar[rr] \ar[uur] && 0 \ar[rr] \ar[uur]
&& 0 \ar[uur]
}
$
\end{tabular}

\vskip 0.1cm
We note that the cluster category $\C_{\T}$ of a tube $\T$ (defined
using~\cite[Section 3]{bmrrt})
is a full subcategory of the cluster category of $\module kQ$ and has been
studied in~\cite{bkl1,bkl2,bmv,vatne}.
It does not contain any cluster-tilting objects.
Theorem~\ref{main} applies to the case of a maximal rigid
object $T$ in $\C_{\T}$. It follows that the module categories over
the algebras studied in~\cite{vatne}, i.e.\ the endomorphism algebras of
maximal rigid objects in $\C_{\T}$, can be obtained as localisations of $\C_{\T}$.
The above example gives rise to the case of a rank
two tube, with $T$ a maximal rigid object in $\C_{\T}$. We see that,
as in~\cite[Thm. 4.9]{vatne}, $\Hom_{\C_{\T}}(T,M)$ can be decomposable even if
$M$ is indecomposable. It follows from Theorem~\ref{main} that the same is
true for $L_{\widetilde{\S}}(M)$.
\end{example}

\end{document}